\title{First-Hitting Times Under Additive Drift}
\author{Timo Kötzing \and Martin~S.\ Krejca}
\institute{Hasso Plattner Institute, University of Potsdam, Potsdam, Germany\vspace*{-1.5\baselineskip}
}
\authorrunning{T.~Kötzing and M.~S.~Krejca}
\providecommand{\ignore}[1]{}
    \def\NAT@spacechar{~}
\let\originalleft\left
\let\originalright\right
\renewcommand{\left}{\mathopen{}\mathclose\bgroup\originalleft}
\renewcommand{\right}{\aftergroup\egroup\originalright}
\def\IfEmptyTF#1{%
	\if\relax\detokenize{#1}\relax
	\expandafter\@firstoftwo
	\else
	\expandafter\@secondoftwo
	\fi}
\DeclareDocumentCommand{\mathOrText}{m}
{%
    \ensuremath{#1}\xspace%
}
\renewcommand*{\Re}{\mathOrText{\mathds{R}}}
\newcommand*{\Na}{\mathOrText{\mathds{N}}}
\DeclareDocumentCommand{\functionTemplate}{m m m m o}
{%
	\IfNoValueTF{#5}%
	{%
		\mathOrText{#1\left#2{#4}\right#3}%
	}%
	{%
		\mathOrText{#1#5#2{#4}#5#3}%
	}%
}
\DeclareDocumentCommand{\probabilistcFunction}{m m O{} o}
{%
	\functionTemplate{#1}{[}{]}{#2\IfEmptyTF{#3}{}{\,\IfNoValueTF{#4}{\left}{#4}\vert\,\vphantom{#2}{#3}\IfNoValueTF{#4}{\right.}{}}}[#4]%
}
\DeclareDocumentCommand{\Pr}{m O{} o}
{%
	\probabilistcFunction{\mathrm{Pr}}{#1}[#2][#3]%
}
\DeclareDocumentCommand{\E}{m O{} o}
{%
	\probabilistcFunction{\mathrm{E}}{#1}[#2][#3]%
}
\DeclareDocumentCommand{\Var}{m O{} o}
{%
	\probabilistcFunction{\mathrm{Var}}{#1}[#2][#3]%
}
\DeclareDocumentCommand{\bigO}{m o}
{%
	\functionTemplate{\mathrm{O}}{(}{)}{#1}[#2]%
}
\DeclareDocumentCommand{\smallO}{m o}
{%
	\functionTemplate{\mathrm{o}}{(}{)}{#1}[#2]%
}
\DeclareDocumentCommand{\bigTheta}{m o}
{%
	\functionTemplate{\Theta}{(}{)}{#1}[#2]%
}
\DeclareDocumentCommand{\bigOmega}{m o}
{%
	\functionTemplate{\Omega}{(}{)}{#1}[#2]%
}
\DeclareDocumentCommand{\smallOmega}{m o}
{%
	\functionTemplate{\upomega}{(}{)}{#1}[#2]%
}
\DeclareDocumentCommand{\eulerE}{o}
{%
    \mathOrText{\mathrm{e}\IfNoValueTF{#1}{}{^{#1}}}%
}
\DeclareDocumentCommand{\poly}{m o}
{%
	\functionTemplate{\mathrm{poly}}{(}{)}{#1}[#2]%
}
\DeclareDocumentCommand{\range}{m o}
{%
	\functionTemplate{\mathrm{rng}}{(}{)}{#1}[#2]%
}
\DeclareDocumentCommand{\randomProcess}{o}
{%
    \mathOrText{X\IfNoValueTF{#1}{}{_{#1}}}%
}
\newcommand*{\timePoint}{\mathOrText{t}}
\newcommand*{\firstHittingTime}{\mathOrText{T}}
\DeclareDocumentCommand{\transformedProcess}{o}
{%
    \mathOrText{Y\IfNoValueTF{#1}{}{_{#1}}}%
}
\DeclareDocumentCommand{\otherProcess}{o}
{%
    \mathOrText{Z\IfNoValueTF{#1}{}{_{#1}}}%
}
\DeclareDocumentCommand{\filtration}{o}
{%
    \mathOrText{\mathcal{F}\IfNoValueTF{#1}{}{_{#1}}}%
}
\newcommand*{\specialTimePoint}{\mathOrText{t'}}
\DeclareDocumentCommand{\naturalFiltration}{m}
{%
    \mathOrText{\randomProcess[0], \ldots, \randomProcess[#1]}%
}
\DeclareDocumentCommand{\filtration}{o}
{%
    \mathOrText{\mathcal{F}\IfNoValueTF{#1}{}{_{#1}}}%
}
\newcommand*{\xmin}{\mathOrText{x_{\min}}}
\begin{document}

    \maketitle
    
\begin{abstract}
   For the last ten years, almost every theoretical result concerning the expected run time of a randomized search heuristic used \emph{drift theory}, making it the arguably most important tool in this domain.
   Its success is due to its ease of use and its powerful result: drift theory allows the user to derive bounds on the expected first-hitting time of a random process by bounding expected local changes of the process~-- the \emph{drift}. This is usually far easier than bounding the expected first-hitting time directly.
    
    Due to the widespread use of drift theory, it is of utmost importance to have the best drift theorems possible. We improve the fundamental additive, multiplicative, and variable drift theorems by stating them in a form as general as possible and providing examples of why the restrictions we keep are still necessary. Our additive drift theorem for upper bounds only requires the process to be nonnegative, that is, we remove unnecessary restrictions like a finite, discrete, or bounded search space. As corollaries, the same is true for our upper bounds in the case of variable and multiplicative drift. %
\end{abstract}

\ignore
{
    UTF8 Abstract
    
For the last ten years, almost every theoretical result concerning the expected run time of a randomized search heuristic used drift theory, making it the arguably most important tool in this domain. Its success is due to its ease of use and its powerful result: drift theory allows the user to derive bounds on the expected first-hitting time of a random process by bounding expected local changes of the process – the drift. This is usually far easier than bounding the expected first-hitting time directly.

Due to the widespread use of drift theory, it is of utmost importance to have the best drift theorems possible. We improve the fundamental additive, multiplicative, and variable drift theorems by stating them in a form as general as possible and providing examples of why the restrictions we keep are still necessary. Our additive drift theorem for upper bounds only requires the process to be nonnegative, that is, we remove unnecessary restrictions like a finite, discrete, or bounded search space. As corollaries, the same is true for our upper bounds in the case of variable and multiplicative drift.
    
}

    \section{Drift Theory}
    \label{sec:introduction}

In the theory of randomized algorithms, the first and most important part of algorithm analysis is to compute the expected run time. A finite run time guarantees that the algorithm terminates almost surely, and, due to Markov's inequality, the probability of the run time being far larger than the expected value can be bounded, too. Thus, it is important to have strong and easy to handle tools in order to derive expected run times. The de facto standard for this purpose in the theory of randomized search heuristics is \emph{drift theory}.

Drift theory is a general term for a collection of theorems that consider random processes and bound the expected time it takes the process to reach a certain value~-- the \emph{first-hitting time}. The beauty and appeal of these theorems lie in them usually having few restrictions but yielding strong results. Intuitively speaking, in order to use a drift theorem, one only needs to estimate the expected change of a random process~-- the \emph{drift}~-- at any given point in time. Hence, a drift theorem turns expected local changes of a process into expected first-hitting times. In other words, local information of the process is transformed into global information.

Drift theory gained traction in the theory of randomized search heuristics when it was introduced to the community by He and Yao~\cite{DBLP:journals/ai/HeY01, DBLP:journals/nc/HeY04} via the \emph{additive drift theorem}. However, they were not the first to prove it. The result dates back to Hajek~\cite{hajek1982hitting}, who stated the theorem in a fashion quite different from how it is phrased nowadays. According to Lengler~\cite{DBLP:journals/corr/abs-1712-00964}, the theorem has been proven even prior to that various times. Since then, many different versions of drift theorems have been proven, the most common ones being the \emph{variable drift theorem}~\cite{Johannsen10} and the \emph{multiplicative drift theorem}~\cite{DBLP:conf/gecco/DoerrJW10}. The different names refer to how the drift is bounded other than independent of time: additive means that the drift is bounded by the same value for all states; in a multiplicative scenario, the drift is bounded by a multiple of the current state of the process; and in the setting of variable drift, the drift is bounded by any monotone function with respect to the current state of the process.

At first, the theorems were only stated over finite or discrete search spaces. However, these restrictions are seldom used in the proofs and thus not necessary, as pointed out, for example, by Lehre and Witt~\cite{Lehre2014}, who prove a general drift theorem without these restrictions. Nonetheless, up to date, all drift theorems require a bounded search space;\footnote{Lengler~\cite{DBLP:journals/corr/abs-1712-00964} briefly mentions infinite search spaces and also gives a proof for a restricted version of the additive drift theorem in the setting of an unbounded discrete search space.} Semenov and Terkel~\cite{DBLP:journals/ec/Semenov03} state a Theorem very much like an additive drift theorem for unbounded search spaces, but they require the process to have a bounded variance, as they also prove concentration for their result.

The area of randomized search heuristics is, in fact, in strong need of extended drift theorems and a careful discussion of what happens when restrictions are not met. While most search spaces are finite and, thus, the existing drift theorems sufficient, progress will be inhibited whenever search spaces are not naturally finite. Worse yet, the existing drift theorems might be applied where they are not applicable, as happened when in \cite[Section 4]{doerr2017bounding} the additive drift theorem was applied on an unbounded search space.

While previously new drift theorems were proven on a need-to-have basis using whatever restrictions where present in the concrete application, we aim at providing the best possible theorem for any applications to come. For the restrictions that remain, we give examples that show that these restrictions are, in some sense, necessary. In this way, we want to further the understanding of random processes in general and not just for a concrete application; thus, this work should benefit a lot of future work in the area of randomized search heuristics.

Our most important results are the upper and lower bound of the classical additive drift theorem (Thm.~\ref{thm:addDriftUnbounded} and~\ref{thm:addDriftLowerBoundExpectedBoundedSteps}, respectively), which we prove for unbounded\footnote{For the upper bound, we require the search space to be lower-bounded but not upper-bounded. We still refer to such a setting as unbounded.} search spaces. These theorems are used as a foundation for all of our other drift theorems in other settings.
Overall, our results can be summarized as follows:

For \textbf{additive drift}, we prove an upper bound for any nonnegative process (Thm.~\ref{thm:addDriftUnbounded}), and a lower bound for processes with bounded \emph{expected} step size (Thm.~\ref{thm:addDriftLowerBoundExpectedBoundedSteps}).

For \textbf{multiplicative drift} and \textbf{variable drift}, we prove upper bounds for any nonnegative process (Cor.~\ref{cor:MultiDriftUnbounded} and~\ref{cor:MultiDriftUnboundedNoGap}; and Thm.~\ref{thm:varDriftUnbounded} and~\ref{thm:varDriftUnboundedNoGap}, respectively).

\smallskip
The intention of this paper is to provide a fully-packed reference for very general yet easy-to-apply drift theorems. That is, we try to keep the requirements of the theorems as easy as possible but still state the theorems in the most general way, given the restrictions. Further, we discuss the ideas behind the different theorems and some of the proofs in order to provide insights into how and why drift works, we provide examples, and we discuss prior work at the beginning of each section.

We only consider bounds on the expected first-hitting time, as this is already a vast field to explore. However, we want to mention that drift theory has also brought forth other results than expected first-hitting times, namely, concentration bounds and negative drift, which are related. Both areas bound \emph{the probability} of the first-hitting time taking certain values. Concentration bounds show how unlikely it is for a process to take much longer than the expected first-hitting time~\cite{DBLP:conf/ppsn/DoerrG10a, DBLP:journals/algorithmica/Kotzing16}. On the other hand, negative drift bounds how likely it is for the process to reach the goal although the drift is going the opposite direction \cite{DBLP:journals/algorithmica/Kotzing16, OlivetoW11, OlivetoW12}. These results are also very helpful but out of the scope of this paper.

Our paper is structured as follows: in Section~\ref{sec:prelims}, we start by introducing important notation and terms, which we use throughout the entire paper. Further, we also discuss Theorem~\ref{thm:optionalStopping}, which our proofs of the additive drift theorems rely on. In Section~\ref{sec:additiveDrift}, we discuss additive drift and prove our main results. We then continue with variable drift in Section~\ref{sec:variableDrift} as a generalization of additive drift. In this section, we introduce two different versions of first-hitting time that our results are based on. Last, we consider the scenario of multiplicative drift in Section~\ref{sec:multiplicativeDrift}.

Most of our proofs can be found in the appendix. A shorter version of this paper has been accepted at PPSN~$2018$.

    \section{Preliminaries}
    \label{sec:prelims}
    
We consider the expected \emph{first-hitting time}~\firstHittingTime of a process $(\randomProcess[\timePoint])_{\timePoint \in \Na}$ over~\Re, which we call~\randomProcess[\timePoint] for short. That is, we are interested in the expected time it takes the process to reach a certain value for the first time, which we will refer to as the \emph{target}. Usually, our target is the value~$0$, that is, we will define the random variable $\firstHittingTime = \inf\{\timePoint \mid \randomProcess[\timePoint] \leq 0\}$ (where we define that $\inf\emptyset \coloneqq \infty$).

We provide bounds on~$\E{\firstHittingTime}[\randomProcess[0]]$ with respect to the \emph{drift} of~\randomProcess[\timePoint], which is defined as
\[
    \randomProcess[\timePoint] - \E{\randomProcess[\timePoint + 1]}[\naturalFiltration{\timePoint}]\ .
\]
Note that~$\E{\firstHittingTime}[\randomProcess[0]]$ as well as~$\E{\randomProcess[\timePoint + 1]}[\naturalFiltration{\timePoint}]$ are both random variables. Because of the latter, the drift is a random variable, too. Further note that, if the drift is positive,~\randomProcess[\timePoint] decreases its value in expectation over time when considering positive starting values. This is why~$0$ will be our target most of the time.

We are only interested in the process~\randomProcess[\timePoint] until the time point~\firstHittingTime. That is, all of our requirements only need to hold for all $\timePoint < \firstHittingTime$ (since we also consider $\timePoint + 1$). While this phrasing is intuitive, it is formally inaccurate, as~\firstHittingTime is a random variable. We will continue to use it; however, formally, each of our inequalities in each of our requirements should be multiplied with the characteristic function of the event $\{\timePoint < \firstHittingTime\}$. In this way, the inequalities trivially hold once $\timePoint \geq \firstHittingTime$ and, otherwise, are the inequalities we state. This is similar to conditioning on the event $\{\timePoint < \firstHittingTime\}$ but has the benefit of being valid even if $\Pr{\timePoint < \firstHittingTime} = 0$ holds.

We want to mention that all of our results actually hold for a random process $(\randomProcess[\timePoint])_{\timePoint \in \Na}$ adapted to a filtration~$(\filtration[\timePoint])_{\timePoint \in \Na}$, where~\firstHittingTime is a stopping time defined with respect to~\filtration[\timePoint].\!\footnote{More information on filtrations can be found, for example, in \emph{Randomized Algorithms}~\cite{Motwani:1995:RA:211390} in the section on martingales.} Since this detail is frequently ignored in drift theory, we phrase all of our results with respect to the natural filtration, making them look more familiar to usual drift results. For any time point $\timePoint \leq \firstHittingTime$, we call $\randomProcess[0], \ldots, \randomProcess[\timePoint - 1]$ the \emph{history} of the process.

Last, we state all of our results conditional on~\randomProcess[0], that is, we bound \E{\firstHittingTime}[\randomProcess[0]]. However, by the law of total expectation, one can easily derive a bound for $\E{\firstHittingTime} = \E{\E{\firstHittingTime}[\randomProcess[0]]}[][\big]$.

\subsection{Martingale Theorems}

In this section, we state two theorems that we will use in order to prove our results in the next sections. Both theorems make use of \emph{martingales}, a fundamental concept in the field of probability theory. A martingale is a random process with a drift of~$0$, that is, in expectation, it does not change over time. Further, a \emph{supermartingale} has a drift of at least~$0$, that is, it decreases over time in expectation, and a \emph{submartingale} has a drift of at most~$0$, that is, it increases over time in expectation.

The arguably most important theorem for martingales is the Optional Stopping Theorem (Theorem~\ref{thm:optionalStopping}). We use a version given by Grimmett and Stirzaker~\cite[Chapter~$12.5$, Theorem~$9$]{grimmett2001probability} that can be extended to super- and submartingales.

\begin{theorem}[\textrm{Optional Stopping}]
    \label{thm:optionalStopping}
    Let $(\randomProcess[\timePoint])_{\timePoint \in \Na}$ be a random process over~\Re, and let $\firstHittingTime$ be a stopping time\footnote{Intuitively, for the natural filtration, a stopping time~\firstHittingTime is a random variable over~\Na such that, for all $\timePoint \in \Na$, the event $\{\timePoint \leq \firstHittingTime\}$ is only dependent on $\randomProcess[0], \ldots, \randomProcess[t]$.} for~\randomProcess[\timePoint]. Suppose that
    \begin{enumerate}[label=(\alph*)]
        \item\label{item:optStopFiniteExpectation} $\E{\firstHittingTime} < \infty$ and that
        
        \item there is some value $c \geq 0$ such that, for all $t < \firstHittingTime$, it holds that $\E{|\randomProcess[\timePoint + 1] - \randomProcess[\timePoint]|}[\naturalFiltration{\timePoint}][\big] \leq c$.
    \end{enumerate}
    Then:
    \begin{enumerate}
        \item\label{item:optionalStoppingSupermartingale} If, for all $\timePoint < \firstHittingTime$, $\randomProcess[\timePoint] - \E{\randomProcess[\timePoint + 1]}[\naturalFiltration{\timePoint}] \geq 0$, then $\E{\randomProcess_\firstHittingTime} \leq \E{\randomProcess[0]}$.
        
        \item\label{item:optionalStoppingSubmartingale} If, for all $\timePoint < \firstHittingTime$, $\randomProcess[\timePoint] - \E{\randomProcess[\timePoint + 1]}[\naturalFiltration{\timePoint}] \leq 0$, then $\E{\randomProcess_\firstHittingTime} \geq \E{\randomProcess[0]}$.
    \end{enumerate}
\end{theorem}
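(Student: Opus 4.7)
The plan is to prove part~\ref{item:optionalStoppingSupermartingale} first; part~\ref{item:optionalStoppingSubmartingale} follows immediately by applying the result to the process $(-\randomProcess[\timePoint])_{\timePoint \in \Na}$, which swaps super- and submartingale while preserving both conditions~(a) and~(b). The overall strategy is the standard one: work with the stopped process $\randomProcess[\firstHittingTime \wedge \timePoint]$, prove the desired inequality for this truncation at every finite $\timePoint$, and then let $\timePoint \to \infty$.

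First I would show, by induction on~\timePoint, that $\E{\randomProcess[\firstHittingTime \wedge \timePoint]} \leq \E{\randomProcess[0]}$. For the induction step I decompose
\[
    \randomProcess[\firstHittingTime \wedge (\timePoint + 1)] = \randomProcess[\firstHittingTime \wedge \timePoint] + \big(\randomProcess[\timePoint + 1] - \randomProcess[\timePoint]\big) \mathbf{1}_{\{\timePoint < \firstHittingTime\}},
\]
take conditional expectations given $\naturalFiltration{\timePoint}$, and invoke the supermartingale hypothesis. Since $\mathbf{1}_{\{\timePoint < \firstHittingTime\}}$ is measurable with respect to the history (because~\firstHittingTime is a stopping time), the correction term is nonpositive in conditional expectation, and the tower property closes the step.

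Next comes the passage to the limit. Hypothesis~\ref{item:optStopFiniteExpectation} gives $\firstHittingTime < \infty$ almost surely, so $\randomProcess[\firstHittingTime \wedge \timePoint] \to \randomProcess[\firstHittingTime]$ almost surely as $\timePoint \to \infty$. To interchange limit and expectation I would apply the dominated convergence theorem with the $\timePoint$-independent majorant
\[
    \bigl|\randomProcess[\firstHittingTime \wedge \timePoint]\bigr| \leq \bigl|\randomProcess[0]\bigr| + \sum_{s = 0}^{\firstHittingTime - 1} \bigl|\randomProcess[s + 1] - \randomProcess[s]\bigr|.
\]
A Wald-style computation bounds the expectation of this majorant: writing the sum as $\sum_{s \geq 0} \E{\mathbf{1}_{\{s < \firstHittingTime\}} \bigl|\randomProcess[s + 1] - \randomProcess[s]\bigr|}$, conditioning each term on $\naturalFiltration{s}$, and using hypothesis~(b) yields $c \cdot \Pr{s < \firstHittingTime}$ per term, which sums to $c \cdot \E{\firstHittingTime}$. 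Dominated convergence then delivers $\E{\randomProcess[\firstHittingTime]} = \lim_{\timePoint \to \infty} \E{\randomProcess[\firstHittingTime \wedge \timePoint]} \leq \E{\randomProcess[0]}$.

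The main obstacle is precisely this integrability check, and it is where the two hypotheses interlock. Hypothesis~(b) is only assumed on the event $\{\timePoint < \firstHittingTime\}$, so one must insert the stopping-time indicator before pulling out the conditional expectation; and it is hypothesis~(a) that converts the resulting series $c \cdot \sum_{s \geq 0} \Pr{s < \firstHittingTime}$ into the finite quantity $c \cdot \E{\firstHittingTime}$. Both hypotheses are therefore genuinely used to produce an integrable dominating function and to justify the exchange of limit and expectation that ultimately yields the theorem.
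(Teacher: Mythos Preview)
The paper does not supply its own proof of this theorem: it is quoted as a known result and attributed to Grimmett and Stirzaker~\cite[Chapter~$12.5$, Theorem~$9$]{grimmett2001probability}, so there is no in-paper argument to compare against. Your proposal is the standard textbook proof (indeed essentially the one in Grimmett and Stirzaker): show by induction that the stopped process $\randomProcess[\firstHittingTime \wedge \timePoint]$ inherits the super-/submartingale inequality, then pass to the limit via dominated convergence using the Wald-type bound $\E{|\randomProcess[0]|} + c\,\E{\firstHittingTime}$ as an integrable majorant. The argument is correct, and your remark about where hypotheses~(a) and~(b) interlock is on point. One implicit assumption worth flagging is that $\E{|\randomProcess[0]|} < \infty$, which is needed for the majorant to be integrable and for the conclusion $\E{\randomProcess[\firstHittingTime]} \leq \E{\randomProcess[0]}$ even to be meaningful; this is tacit in the paper's statement as well.
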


Theorem~\ref{thm:optionalStopping} allows us to bound \E{\randomProcess[\firstHittingTime]} independently of its history, which is why our drift results are independent of the history of~\randomProcess[\firstHittingTime] as well.

Note that case~(\ref{item:optionalStoppingSupermartingale}) refers to supermartingales, whereas case~(\ref{item:optionalStoppingSubmartingale}) refers to submartingales. Intuitively, case~(\ref{item:optionalStoppingSupermartingale}) says that a supermartingale will have, in expectation, a lower value than it started with, which makes sense, as a supermartingale decreases over time in expectation. Case~(\ref{item:optionalStoppingSubmartingale}) is analogous for submartingales. For martingales, both cases can be combined in order to yield an equality.

Martingales are essential in the proofs of our theorems. We will frequently transform our process such that it results in a supermartingale or a submartingale in order to apply Theorem~\ref{thm:optionalStopping}.

Another useful theorem for martingales is the following Azuma--Hoeffding Inequality~\cite{azuma1967}. This inequality basically is for martingales what a Chernoff bound is for binomial distributions.

\begin{theorem}[Azuma-Hoeffding Inequality]
    \label{thm:AzumaHoeffding}
    Let $(\randomProcess[\timePoint])_{\timePoint \in \Na}$ be a random process over~\Re. Suppose that
    \begin{enumerate}[label=(\alph*)]
        \item there is some value $c > 0$ such that, for all $\timePoint \in \Na$, it holds that $|\randomProcess[\timePoint] - \randomProcess[\timePoint + 1]| < c$.
    \end{enumerate}
    If, for all $\timePoint \in \Na$, $\randomProcess[\timePoint] - \E{\randomProcess[\timePoint + 1]}[\naturalFiltration{\timePoint}] \geq 0$, then, for all $\timePoint \in \Na$ and all $r > 0$,
    \[
        \Pr{\randomProcess[\timePoint] - \randomProcess[0] \geq r} \leq \eulerE^{-\frac{r^2}{2\timePoint c^2}}\ .
    \]
\end{theorem}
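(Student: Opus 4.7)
The plan is to follow the standard Chernoff-style exponential-moment method for martingale concentration. Let $D_i \coloneqq \randomProcess[i] - \randomProcess[i-1]$ so that $\randomProcess[\timePoint] - \randomProcess[0] = \sum_{i=1}^{\timePoint} D_i$. Assumption~(a) gives $|D_i| < c$, and the supermartingale hypothesis rewrites as $\E{D_i}[\naturalFiltration{i-1}] \leq 0$ for every $i \leq \timePoint$.

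For any $\lambda > 0$, I would apply Markov's inequality to the nonnegative random variable $\eulerE^{\lambda(\randomProcess[\timePoint] - \randomProcess[0])}$, obtaining
\[
    \Pr{\randomProcess[\timePoint] - \randomProcess[0] \geq r} \leq \eulerE^{-\lambda r}\,\E{\eulerE^{\lambda(\randomProcess[\timePoint] - \randomProcess[0])}}\,.
\]
The heart of the proof is to bound the moment generating function on the right. I would peel off the last increment by the tower property,
\[
    \E{\eulerE^{\lambda \sum_{i=1}^{\timePoint} D_i}} = \E{\eulerE^{\lambda \sum_{i=1}^{\timePoint - 1} D_i}\,\E{\eulerE^{\lambda D_\timePoint}}[\naturalFiltration{\timePoint - 1}]}\,,
\]
and then plug in the conditional Hoeffding bound $\E{\eulerE^{\lambda D_\timePoint}}[\naturalFiltration{\timePoint - 1}] \leq \eulerE^{\lambda^2 c^2 / 2}$. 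Iterating this peel-off step $\timePoint$ times yields $\E{\eulerE^{\lambda(\randomProcess[\timePoint] - \randomProcess[0])}} \leq \eulerE^{\timePoint \lambda^2 c^2 / 2}$, so $\Pr{\randomProcess[\timePoint] - \randomProcess[0] \geq r} \leq \eulerE^{-\lambda r + \timePoint \lambda^2 c^2 / 2}$. Minimizing the exponent in $\lambda > 0$ at $\lambda = r / (\timePoint c^2)$ gives the claimed bound $\eulerE^{-r^2 / (2 \timePoint c^2)}$.

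The main obstacle is that Hoeffding's lemma is classically stated for a \emph{zero}-mean bounded random variable, whereas here the conditional mean of $D_\timePoint$ is only \emph{nonpositive}. I would patch this by centering: set $\mu \coloneqq \E{D_\timePoint}[\naturalFiltration{\timePoint - 1}]$ and $Y \coloneqq D_\timePoint - \mu$. Then $Y$ has zero conditional mean and lies in an interval of length at most $2c$, so the classical Hoeffding lemma (proved by dominating $\eulerE^{\lambda y}$ on $[-c, c]$ by its secant line and then using $\cosh(x) \leq \eulerE^{x^2/2}$) gives $\E{\eulerE^{\lambda Y}}[\naturalFiltration{\timePoint-1}] \leq \eulerE^{\lambda^2 c^2 / 2}$. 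Since $\lambda > 0$ and $\mu \leq 0$, the factor $\eulerE^{\lambda \mu}$ is at most $1$, whence $\E{\eulerE^{\lambda D_\timePoint}}[\naturalFiltration{\timePoint - 1}] = \eulerE^{\lambda \mu}\,\E{\eulerE^{\lambda Y}}[\naturalFiltration{\timePoint-1}] \leq \eulerE^{\lambda^2 c^2 / 2}$, which is exactly the conditional moment bound used above.
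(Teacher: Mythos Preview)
The paper does not supply its own proof of this theorem; it quotes the Azuma--Hoeffding inequality as a known result from the literature (citing Azuma, 1967) and uses it as a black box in the proof of Theorem~\ref{thm:addDriftBounded}. So there is nothing to compare against on the paper's side.

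Your argument is the standard exponential-moment proof and is correct. One small remark on exposition: after centering, $Y = D_\timePoint - \mu$ no longer lies in $[-c,c]$ but in $[-c-\mu,\,c-\mu]$, so the parenthetical about the secant line on $[-c,c]$ and $\cosh$ does not literally apply to $Y$. What you actually need is the general form of Hoeffding's lemma for a zero-mean variable supported in an interval of length $2c$, which still yields the bound $\eulerE^{\lambda^2 c^2/2}$; alternatively, you can skip the centering step entirely by applying the secant bound directly to $D_\timePoint$ on $[-c,c]$, taking the conditional expectation, and observing that the resulting expression $\cosh(\lambda c) + (\mu/c)\sinh(\lambda c)$ is at most $\cosh(\lambda c) \leq \eulerE^{\lambda^2 c^2/2}$ because $\mu \leq 0$ and $\lambda > 0$. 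Either route closes the argument cleanly.
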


    \section{Additive Drift}
    \label{sec:additiveDrift}
    
We speak of additive drift when the drift can be bounded by a value independent of the process itself. That is, the bound is independent of time and state.

When considering the first-hitting time~\firstHittingTime of a random process $(\randomProcess[\timePoint])_{\timePoint \in \Na}$ whose drift is \emph{lower}-bounded by a value $\delta > 0$, then \E{\firstHittingTime}[\randomProcess[0]] is \emph{upper}-bounded by $\randomProcess[0]/\delta$. Interestingly, if the drift of \randomProcess[\timePoint] is \emph{upper}-bounded by~$\delta$, \E{\firstHittingTime}[\randomProcess[0]] is \emph{lower}-bounded by $\randomProcess[0]/\delta$. Thus, if the drift of \randomProcess[\timePoint] is exactly~$\delta$, that is, we know how much expected progress \randomProcess[\timePoint] makes in each step, our expected first-hitting time is equal to $\randomProcess[0]/\delta$. This result is remarkable, as it can be understood intuitively as follows: since we stop once \randomProcess[\timePoint] reaches~$0$, the distance from our start (\randomProcess[0]) to our goal ($0$) is exactly~\randomProcess[0], and we make an expected progress of~$\delta$ each step. Thus, in expectation, we are done after $\randomProcess[0]/\delta$ steps.

\subsection{Upper Bounds}
\label{subsec:additiveDriftUpperBound}

We give a proof for the Additive Drift Theorem, originally published (in a more restricted version) by He and Yao~\cite{DBLP:journals/ai/HeY01, DBLP:journals/nc/HeY04}. We start by reproving the original theorem (which requires a bounded search space) but in a simpler, more elegant and educational manner. We then greatly extend this result by generalizing it to processes with a bounded step width. Finally, we lift also this restriction.

In all of these cases, we require our random process to only take nonnegative values. The intuitive reason for this is the following: when estimating an upper bound for the expected first-hitting time, we need a lower bound of the drift. This means the larger our bound of the drift, the better our bound for the first-hitting time. Since our process is nonnegative, the drift for values close to~$0$ provides a natural bound for the drift (which is uniform over the entire search space, since we look at \emph{additive} drift). If our process could take values less than~$0$, we could artificially increase our lower bound of the drift for values that are now bounded by~$0$ and, thus, improve our first-hitting time. At the end of this section, we also give an example (Example~\ref{ex:additiveDriftUpperBound}), which shows how our most general drift theorem (Theorem~\ref{thm:addDriftUnbounded}) fails if the process can take negative values.

The proof of the following theorem transforms the process into a supermartingale and then uses Theorem~\ref{thm:optionalStopping}. However, in order to apply Theorem~\ref{thm:optionalStopping}, we have to make sure to fulfill its condition~\ref{item:optStopFiniteExpectation}, which is the hardest part.

\begin{theorem}[\textrm{Upper Additive Drift, Bounded}]
    \label{thm:addDriftBounded}
    Let $(\randomProcess[\timePoint])_{\timePoint \in \Na}$ be a random process over~\Re, and let $\firstHittingTime = \inf\{\timePoint \mid \randomProcess[\timePoint] \leq 0\}$. Furthermore, suppose that,
    \begin{enumerate}[label=(\alph*)]
        \item\label{item:additiveDriftNonNegative} for all $\timePoint \leq \firstHittingTime$, it holds that $\randomProcess[\timePoint] \geq 0$, that
        
        \item\label{item:additiveDriftCondition} there is some value $\delta > 0$ such that, for all $\timePoint < \firstHittingTime$, it holds that $\randomProcess[\timePoint] - \E{\randomProcess[\timePoint + 1]}[\naturalFiltration{\timePoint}] \geq \delta$, and that
        
        \item\label{item:additiveDriftBoundedSpace} there is some value $c \geq 0$ such that, for all $\timePoint < \firstHittingTime$, it holds that $\randomProcess[\timePoint] \leq c$.
    \end{enumerate}
    Then
    \[
        \E{\firstHittingTime}[\randomProcess[0]] \leq \frac{\randomProcess[0]}{\delta}\ .
    \]

\end{theorem}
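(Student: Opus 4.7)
The plan is to transform $\randomProcess[\timePoint]$ into a supermartingale by setting $\transformedProcess[\timePoint] = \randomProcess[\timePoint] + \additiveDriftConstant \timePoint$. The drift hypothesis~\ref{item:additiveDriftCondition} then gives, for every $\timePoint < \firstHittingTime$,
\[
  \E{\transformedProcess[\timePoint + 1]}[\naturalFiltration{\timePoint}] = \E{\randomProcess[\timePoint + 1]}[\naturalFiltration{\timePoint}] + \additiveDriftConstant(\timePoint + 1) \leq \randomProcess[\timePoint] - \additiveDriftConstant + \additiveDriftConstant(\timePoint + 1) = \transformedProcess[\timePoint],
\]
so $\transformedProcess$ is a supermartingale up to time $\firstHittingTime$. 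If Theorem~\ref{thm:optionalStopping}(\ref{item:optionalStoppingSupermartingale}) applies, one obtains $\E{\transformedProcess[\firstHittingTime]} \leq \E{\transformedProcess[0]} = \randomProcess[0]$; combined with $\randomProcess[\firstHittingTime] \geq 0$ from~\ref{item:additiveDriftNonNegative}, this rearranges to $\additiveDriftConstant \E{\firstHittingTime} \leq \randomProcess[0]$, which is the desired inequality.

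The difficulty is meeting hypothesis~\ref{item:optStopFiniteExpectation} of Theorem~\ref{thm:optionalStopping}, which requires $\E{\firstHittingTime} < \infty$~-- precisely the quantity we are trying to bound. I would break this apparent circularity by first analysing the stopped process $\transformedProcess[\timePoint \wedge \firstHittingTime]$, which inherits the supermartingale property without any integrability worries: boundedness hypothesis~\ref{item:additiveDriftBoundedSpace} forces $\randomProcess[\timePoint \wedge \firstHittingTime] \in [0, c]$, and the contribution $\additiveDriftConstant(\timePoint \wedge \firstHittingTime)$ is bounded by $\additiveDriftConstant \timePoint$ for each fixed $\timePoint$. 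Iterating the stopped supermartingale inequality yields $\E{\randomProcess[\timePoint \wedge \firstHittingTime]} + \additiveDriftConstant \E{\timePoint \wedge \firstHittingTime} \leq \randomProcess[0]$, and dropping the nonnegative first term leaves $\E{\timePoint \wedge \firstHittingTime} \leq \randomProcess[0]/\additiveDriftConstant$ uniformly in $\timePoint$. Monotone convergence as $\timePoint \to \infty$ then delivers $\E{\firstHittingTime} \leq \randomProcess[0]/\additiveDriftConstant$, which simultaneously establishes finiteness and the claimed bound.

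From that point, the invocation of Theorem~\ref{thm:optionalStopping} is essentially a formality: its condition~(b) follows from $\randomProcess[\timePoint] \in [0, c]$ together with the drift bound (which controls $\E{\randomProcess[\timePoint + 1]}[\naturalFiltration{\timePoint}]$ from above and forces the absolute conditional increment of $\transformedProcess$ to stay bounded in $c$ and $\additiveDriftConstant$), while condition~\ref{item:optStopFiniteExpectation} has just been verified. The step I expect to be the true obstacle is the one described above~-- sidestepping the finiteness prerequisite of the Optional Stopping Theorem~-- since every other piece is immediate from the hypotheses. Hypothesis~\ref{item:additiveDriftBoundedSpace} enters precisely to keep the stopped supermartingale integrable; removing it will be the main additional work in the subsequent Theorem~\ref{thm:addDriftUnbounded}.
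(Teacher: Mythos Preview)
Your argument is correct and in fact more direct than the paper's. Both proofs form the supermartingale $\transformedProcess[\timePoint] = \randomProcess[\timePoint] + \additiveDriftConstant\timePoint$, but the paper establishes $\E{\firstHittingTime} < \infty$ by invoking the Azuma--Hoeffding inequality (Theorem~\ref{thm:AzumaHoeffding}) on~$\transformedProcess$: boundedness of~$\randomProcess$ gives bounded increments for~$\transformedProcess$, hence an exponentially decaying tail for $\Pr{\firstHittingTime > \specialTimePoint}$, and only then is Theorem~\ref{thm:optionalStopping} applied. Your route via the stopped process $\transformedProcess[\timePoint \wedge \firstHittingTime]$ and monotone convergence bypasses Azuma--Hoeffding entirely and delivers the bound $\E{\firstHittingTime}[\randomProcess[0]] \leq \randomProcess[0]/\additiveDriftConstant$ in one stroke, so that the subsequent appeal to Optional Stopping is, as you note, redundant. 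The paper's detour through a concentration inequality is less economical here, though it does foreshadow why bounded differences matter for Theorem~\ref{thm:addDriftBoundedSteps}; your stopped-process argument is the cleaner textbook approach and uses hypothesis~\ref{item:additiveDriftBoundedSpace} only to guarantee integrability of $\randomProcess[\timePoint \wedge \firstHittingTime]$, exactly as you identify.
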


Note that condition~\ref{item:additiveDriftNonNegative} means that~\firstHittingTime can be rewritten as $\inf\{\timePoint \mid \randomProcess[\timePoint] = 0\}$, that is, we have to hit~$0$ exactly in order to stop. We show in Example~\ref{ex:additiveDriftUpperBound} why this condition is crucial.

Condition~\ref{item:additiveDriftCondition} bounds the expected progress we make each time step. The larger~$\delta$, the lower the expected first-hitting time. However, due to condition~\ref{item:additiveDriftNonNegative}, note that small values of~\randomProcess[\timePoint] create a natural upper bound for~$\delta$, as the progress for such values can be at most $|\randomProcess[\timePoint] - 0| = \randomProcess[\timePoint]$.

Condition~\ref{item:additiveDriftBoundedSpace} means that we are considering random variables over the interval $[0, c]$. It is a restriction that all previous additive drift theorems have but that is actually not necessary, as we show with Theorem~\ref{thm:addDriftUnbounded}. In the following proof, we use this condition in order to show that $\E{\firstHittingTime} < \infty$, which is necessary when applying Theorem~\ref{thm:optionalStopping}.

\begin{proof}[Proof of Theorem~\ref{thm:addDriftBounded}]
    We want to use case~(\ref{item:optionalStoppingSupermartingale}) of the Optional Stopping Theorem in the version of Theorem~\ref{thm:optionalStopping}. Thus, we define, for all $\timePoint < \firstHittingTime$, $\transformedProcess[\timePoint] = \randomProcess[\timePoint] + \delta\timePoint$, which is a supermartingale, since
    \begin{align*}
        \transformedProcess[\timePoint] - \E{\transformedProcess[\timePoint + 1]}[\transformedProcess[0], \ldots, \transformedProcess[\timePoint]]
        &= \randomProcess[\timePoint] + \delta\timePoint - \E{\randomProcess[\timePoint + 1] + \delta(\timePoint + 1)}[\naturalFiltration{\timePoint}]\\
        &= \randomProcess[\timePoint] - \E{\randomProcess[\timePoint + 1]}[\naturalFiltration{\timePoint}] - \delta \geq 0\ ,
    \end{align*}
    as we assume that $\randomProcess[\timePoint] - \E{\randomProcess[\timePoint + 1]}[\naturalFiltration{\timePoint}] \geq \delta$ for all $\timePoint < \firstHittingTime$. Note that we can change the condition $\transformedProcess[0], \ldots, \transformedProcess[\timePoint]$ to \naturalFiltration{\timePoint} because the transformation from \randomProcess[\timePoint] to \transformedProcess[\timePoint] is injective.
    
    \newcommand*{\extraTime}{\mathOrText{r}}
    We now show that $\E{\firstHittingTime}[\randomProcess[0]] < \infty$ holds in order to apply Theorem~\ref{thm:optionalStopping}. Let $\extraTime > 0$, and let~$a$ be any value such that $\Pr{\randomProcess[0] \leq a} > 0$. We condition on the event $\{\randomProcess[0] \leq a\}$, and we consider a time point $\specialTimePoint = (a + \extraTime)/\delta$ and want to bound the probability that \randomProcess[\specialTimePoint] has not reached~$0$ yet, that is, the event $\{\randomProcess[\specialTimePoint] > 0\}$. We rewrite this event as $\{\randomProcess[\specialTimePoint] - a > -a\}$, which is equivalent to $\{\transformedProcess[\specialTimePoint] - a > -a + \delta\specialTimePoint = r\}$, by definition of~\transformedProcess and~\specialTimePoint.
    
    Note that, for all $\timePoint < \firstHittingTime$, $|\transformedProcess[\timePoint] - \transformedProcess[\timePoint + 1]| < c + \delta + 1$, as we assume that $\randomProcess[\timePoint] \leq c$. Thus, the differences of~\transformedProcess[\timePoint] are bounded and we can apply Theorem~\ref{thm:AzumaHoeffding} as follows, noting that $\transformedProcess[0] = \randomProcess[0] \leq a$, due to our condition on $\{\randomProcess[0] \leq a\}$:
    \[
        \Pr{\transformedProcess[\specialTimePoint] - a > r}[\randomProcess[0] \leq a] \leq \Pr{\transformedProcess[\specialTimePoint] - \transformedProcess[0] \geq r}[\randomProcess[0] \leq a] \leq \eulerE^{-\frac{r^2}{2\specialTimePoint(c + \delta + 1)^2}}\ .
    \]
    If we choose $\extraTime \geq a$, we get $\specialTimePoint \leq 2\extraTime/\delta$ and, thus,
    \[
        \Pr{\transformedProcess[\specialTimePoint] - \transformedProcess[0] > r}[\randomProcess[0] \leq a] \leq \eulerE^{-\frac{r\delta}{4(c + \delta + 1)^2}}\ .
    \]
    This means that the probability that \randomProcess[\specialTimePoint] has not reached~$0$ goes exponentially fast toward~$0$ as $\specialTimePoint$ (and, hence, \extraTime) goes toward $\infty$. Thus, the expected value of~\firstHittingTime is finite.

    Now we can use case~(\ref{item:optionalStoppingSupermartingale}) of Theorem~\ref{thm:optionalStopping} in order to get $\E{\transformedProcess[\firstHittingTime]}[\randomProcess[0]] \leq \E{\transformedProcess[0]}[\randomProcess[0]]$. In particular, noting that $\randomProcess[\firstHittingTime] = 0$ by definition,
    \begin{align*}
        \randomProcess[0] &= \E{\randomProcess[0]}[\randomProcess[0]] = \E{\transformedProcess[0]}[\randomProcess[0]]\\
        &\geq \E{\transformedProcess[\firstHittingTime]}[\randomProcess[0]] = \E{\randomProcess[\firstHittingTime] + \delta\firstHittingTime}[\randomProcess[0]] = \E{\randomProcess[\firstHittingTime]}[\randomProcess[0]] + \delta\E{\firstHittingTime}[\randomProcess[0]] = \delta\E{\firstHittingTime}[\randomProcess[0]]\ .
    \end{align*}
    Thus, we get the desired bound by dividing by $\delta$.\qed
\end{proof}

Note that the arguments in this proof only need the property of bounded differences in order to apply Theorem~\ref{thm:AzumaHoeffding}. Thus, we can relax the condition of a bounded state space into bounded step size, which can be seen in the following theorem.

\begin{theorem}[\textrm{Upper Additive Drift, Bounded Step Size}]
    \label{thm:addDriftBoundedSteps}
    Let $(\randomProcess[\timePoint])_{\timePoint \in \Na}$ be a random process over~\Re, and let $\firstHittingTime = \inf\{\timePoint \mid \randomProcess[\timePoint] \leq 0\}$. Furthermore, suppose that,
    \begin{enumerate}[label=(\alph*)]
        \item for all $\timePoint \leq \firstHittingTime$, it holds that $\randomProcess[\timePoint] \geq 0$, that
        
        \item there is some value $\delta > 0$ such that, for all $\timePoint < \firstHittingTime$, it holds that $\randomProcess[\timePoint] - \E{\randomProcess[\timePoint + 1]}[\naturalFiltration{\timePoint}] \geq \delta$, and that
        
        \item there is some value $c \geq 0$ such that, for all $\timePoint < \firstHittingTime$, it holds that $|\randomProcess[\timePoint + 1] - \randomProcess[\timePoint]| \leq c$.
    \end{enumerate}
    Then
    \[
        \E{\firstHittingTime}[\randomProcess[0]] \leq \frac{\randomProcess[0]}{\delta}\ .
    \]

\end{theorem}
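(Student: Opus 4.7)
The plan is to mirror the proof of Theorem~\ref{thm:addDriftBounded} essentially verbatim, since the only place where condition~\ref{item:additiveDriftBoundedSpace} of that theorem was actually used was to guarantee bounded one-step differences $|Y_{t+1} - Y_t|$, enabling Azuma--Hoeffding. The new hypothesis $|X_{t+1}-X_t|\le c$ gives exactly that bounded-difference property directly, without needing boundedness of the state space itself.

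Concretely, I would define the time-shifted process $Y_t = X_t + \delta t$ (for $t < T$), and verify that it is a supermartingale by the same one-line calculation as before: the assumption $X_t - \E{X_{t+1}}[\naturalFiltration{t}] \geq \delta$ translates to $Y_t - \E{Y_{t+1}}[\naturalFiltration{t}] \geq 0$. Next, to discharge condition~\ref{item:optStopFiniteExpectation} of the Optional Stopping Theorem, I would show $\E{T}[X_0] < \infty$ by the same Azuma--Hoeffding argument as in Theorem~\ref{thm:addDriftBounded}: conditioning on $\{X_0 \le a\}$ and choosing $t' = (a+r)/\delta$, the event $\{X_{t'} > 0\}$ rewrites to $\{Y_{t'} - Y_0 > r\}$, and since $|Y_{t+1} - Y_t| \le c + \delta$ by the new step-size hypothesis, Theorem~\ref{thm:AzumaHoeffding} yields an exponentially decaying tail in $r$, so $\E{T}[X_0]$ is finite.

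With finiteness of $\E{T}[X_0]$ established, I would invoke case~(\ref{item:optionalStoppingSupermartingale}) of Theorem~\ref{thm:optionalStopping} applied to $Y_t$. Here I should briefly check hypothesis~(b) of Theorem~\ref{thm:optionalStopping} for $Y$, but this is immediate because $|Y_{t+1}-Y_t|\le c+\delta$ again. The conclusion $\E{Y_T}[X_0]\le \E{Y_0}[X_0]$ combined with $X_T = 0$ gives $X_0 \ge \delta\,\E{T}[X_0]$, which rearranges to the desired bound.

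The only real obstacle in this proof is, as in Theorem~\ref{thm:addDriftBounded}, ensuring $\E{T}[X_0]<\infty$; once this is in hand, optional stopping does all the remaining work. Since the bounded step-size assumption is exactly what Azuma--Hoeffding needs, the argument goes through without surprise, and no new technical idea beyond the bounded-case proof is required. This is why, as the excerpt's transitional remark already indicates, the theorem should be stated and proved as a direct strengthening of Theorem~\ref{thm:addDriftBounded}.
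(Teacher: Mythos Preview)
Your proposal is correct, and the paper itself explicitly acknowledges that this route works: in the discussion following Theorem~\ref{thm:addDriftBoundedSteps} the authors write that ``the proof of Theorem~\ref{thm:addDriftBounded} can be used for Theorem~\ref{thm:addDriftBoundedSteps} as well.'' You have simply observed that the bounded-state-space assumption in Theorem~\ref{thm:addDriftBounded} was only ever used to feed Azuma--Hoeffding a bounded-increment guarantee, and that the new step-size hypothesis supplies this directly.

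However, the paper deliberately does \emph{not} take this route. Instead it introduces a truncation argument: for each $z>a$ it defines an auxiliary process $\randomProcess[\timePoint][z]$ that agrees with $\randomProcess[\timePoint]$ as long as the trajectory stays below~$z$ and is set to~$0$ otherwise; this auxiliary process lives in $[0,z]$, so Theorem~\ref{thm:addDriftBounded} applies to it and gives $\E{\firstHittingTime[z]}[\randomProcess[0]]\le \randomProcess[0]/\delta$. The bounded step size is then used only to argue that for $z\ge a+ck$ the event $\{\firstHittingTime=k\}$ coincides with $\{\firstHittingTime[z]=k\}$, so the point masses converge and a convergence lemma (Lemma~\ref{lem:convergenceLemma}) transfers the expectation bound to~$\firstHittingTime$ itself. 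The reason the authors prefer this detour is that it serves as the template for Theorem~\ref{thm:addDriftUnbounded}: there no step-size bound is available, so your Azuma--Hoeffding argument does not apply, but the truncation-plus-limit scheme still works once one replaces the deterministic containment by a Markov-inequality tail bound. Your approach is shorter and more transparent for this particular theorem; the paper's approach is an investment that pays off in the next one.
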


Although the proof of Theorem~\ref{thm:addDriftBounded} can be used for Theorem~\ref{thm:addDriftBoundedSteps} as well, we provide a different proof strategy in the appendix, which we then generalize for our next theorem. This alternative strategy defines a process similar to~\randomProcess[\timePoint] that behaves like~\randomProcess[\timePoint] in the limit.

The proof of Theorem~\ref{thm:addDriftBoundedSteps} makes use of Theorem~\ref{thm:addDriftBounded} by artificially bounding the search space for a time that is sufficient in order to bound the expected first-hitting time. This approach can be used in order to let the restriction of the bounded step size fall entirely. Since we cannot make many assumptions about the process in this case anymore, we rely on Markov's inequality in order to show that our process will not leave, with sufficiently high probability, an interval large enough to properly bound the expected first-hitting time.

\begin{theorem}[\textrm{Upper Additive Drift, Unbounded}]
    \label{thm:addDriftUnbounded}
    Let $(\randomProcess[\timePoint])_{\timePoint \in \Na}$ be a random process over~\Re, and let $\firstHittingTime = \inf\{\timePoint \mid \randomProcess[\timePoint] \leq 0\}$. Furthermore, suppose that,
    \begin{enumerate}[label=(\alph*)]
        \item for all $\timePoint \leq \firstHittingTime$, it holds that $\randomProcess[\timePoint] \geq 0$, and that
        
        \item there is some value $\delta > 0$ such that, for all $\timePoint < \firstHittingTime$, it holds that $\randomProcess[\timePoint] - \E{\randomProcess[\timePoint + 1]}[\naturalFiltration{\timePoint}] \geq \delta$.
    \end{enumerate}
    Then
    \[
        \E{\firstHittingTime}[\randomProcess[0]] \leq \frac{\randomProcess[0]}{\delta}\ .
    \]
\end{theorem}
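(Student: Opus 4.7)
The plan is to reduce the claim to the already-established bounded version, Theorem~\ref{thm:addDriftBounded}, via a truncation argument. Condition on \randomProcess[0] throughout, and fix an arbitrary threshold $M \geq \randomProcess[0]$. I introduce an auxiliary process $\transformedProcess[\timePoint]^{(M)}$ that tracks \randomProcess[\timePoint] as long as \randomProcess has stayed at or below~$M$, and is reset to~$0$ the first time \randomProcess exceeds~$M$. Letting $\tau_M = \inf\{\timePoint : \randomProcess[\timePoint] > M\}$, the first time this auxiliary process hits~$0$ equals $\min(\firstHittingTime, \tau_M)$.

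Next, I would verify that $\transformedProcess^{(M)}$ satisfies all three hypotheses of Theorem~\ref{thm:addDriftBounded}. Nonnegativity and upper-boundedness by~$M$ (for every $\timePoint < \min(\firstHittingTime, \tau_M)$) are both immediate from the construction. For the drift hypothesis, observe that on the relevant event $\transformedProcess[\timePoint + 1]^{(M)} = \randomProcess[\timePoint + 1]\,\mathds{1}_{\randomProcess[\timePoint + 1] \leq M} \leq \randomProcess[\timePoint + 1]$, where the inequality uses the nonnegativity of \randomProcess[\timePoint+1]. Combining with the drift hypothesis for~\randomProcess[\timePoint],
\[
    \E{\transformedProcess[\timePoint + 1]^{(M)}}[\naturalFiltration{\timePoint}] \leq \E{\randomProcess[\timePoint + 1]}[\naturalFiltration{\timePoint}] \leq \randomProcess[\timePoint] - \delta = \transformedProcess[\timePoint]^{(M)} - \delta.
\]
Theorem~\ref{thm:addDriftBounded} therefore delivers $\E{\min(\firstHittingTime, \tau_M)}[\randomProcess[0]] \leq \randomProcess[0]/\delta$, with a bound that does not depend on~$M$.

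Finally, I would let $M \to \infty$. Because \randomProcess[\timePoint] takes values in~\Re, it is almost surely finite at every fixed time, so $\tau_M \uparrow \infty$ almost surely, and consequently $\min(\firstHittingTime, \tau_M) \uparrow \firstHittingTime$ monotonically. Monotone convergence then promotes the $M$-uniform bound to $\E{\firstHittingTime}[\randomProcess[0]] \leq \randomProcess[0]/\delta$, as desired. The step I expect to need the most care is the drift computation for the truncated process: it works precisely because the only modification to the trajectory is to overwrite a nonnegative next value by~$0$, which can only decrease \randomProcess[\timePoint+1] and hence only help the drift. This is exactly the place where the nonnegativity hypothesis is indispensable---dropping it would allow the reset-to-zero step to \emph{erase} drift, and the reduction would collapse.
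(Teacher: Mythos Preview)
Your proposal is correct and follows the same truncation architecture as the paper: cap the process at a threshold, reset to~$0$ on first overshoot, apply Theorem~\ref{thm:addDriftBounded} to the resulting bounded auxiliary process (whose first-hitting time is $\min(\firstHittingTime,\tau_M)$), and then send the threshold to infinity. The drift verification you give is exactly the paper's, and your remark that nonnegativity is what makes the reset-to-zero step drift-preserving is precisely the point.

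The one place where your argument differs is the passage to the limit. The paper establishes $\Pr{\firstHittingTime = k} \leq \lim_{z\to\infty}\Pr{\firstHittingTime_z = k}$ by bounding the overshoot probability $\Pr{\overline{A_{z,\timePoint}}}$ via iterated Markov inequalities and then invokes a custom convergence lemma (Lemma~\ref{lem:convergenceLemma}) to transfer this to expectations. You instead observe that $\tau_M \uparrow \infty$ almost surely (immediate from each $\randomProcess[\timePoint]$ being real-valued, hence almost surely finite) and apply monotone convergence to $\min(\firstHittingTime,\tau_M)\uparrow\firstHittingTime$. Your route is shorter and uses only standard measure-theoretic tools, bypassing both the Markov-inequality computation and Lemma~\ref{lem:convergenceLemma}; the paper's route has the minor advantage of making the pointwise convergence of the hitting-time distributions explicit, but at the cost of more machinery.
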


As we already mentioned before, note that the condition of the process not being negative is important in order to get correct results. The following example highlights this fact.

\begin{example}
    \label{ex:additiveDriftUpperBound}
    Let $n > 1$, and let $(\randomProcess[\timePoint])_{\timePoint \in \Na}$ be a random process with $\randomProcess[0] = 1$ and, for all $t \in \Na$, $\randomProcess[\timePoint + 1] = \randomProcess[\timePoint]$ with probability $1 - 1/n$, and $\randomProcess[\timePoint + 1] = - n + 1$ otherwise. Let~\firstHittingTime denote the first point in time~\timePoint such that the event $\randomProcess[\timePoint] \leq 0$ occurs. We have, for all $\timePoint < \firstHittingTime$, that $\randomProcess[\timePoint] - \E{\randomProcess[\timePoint + 1]}[\naturalFiltration{\timePoint}] = 1$ and, thus, $\E{\firstHittingTime}[\randomProcess[0]] \leq 1$ if we could apply any of the additive drift theorems. However, since~\firstHittingTime follows a geometric distribution with success probability $1/n$, we have $\E{\firstHittingTime}[\randomProcess[0]] = n$.
\end{example}

\subsection{Lower Bound}
\label{subsec:additiveDriftLowerBound}

In this section, we provide a lower bound for the expected first-hitting time under additive drift. In order to do so, we need an upper bound for the drift. Since we now lower-bound the first-hitting time, a large upper bound of the drift makes the result bad. Thus, we can allow the process to take negative values, as these could only increase the drift's upper bound. However, we need to have some restriction on the step size in order to make sure not to move away from the target. Again, we provide an example (Example~\ref{ex:additiveDriftLowerBound}) showing this necessity at the end of this section.

\begin{theorem}[\textrm{Lower Additive Drift, Expected Bounded Step Size}]
    \label{thm:addDriftLowerBoundExpectedBoundedSteps}
    Let $(\randomProcess[\timePoint])_{\timePoint \in \Na}$ be a random process over $\Re$, and let $\firstHittingTime = \inf\{\timePoint \mid \randomProcess[\timePoint] \leq 0\}$. Furthermore, suppose that
    \begin{enumerate}[label=(\alph*)]
        \item there is some value $\delta > 0$ such that, for all $\timePoint < \firstHittingTime$, it holds that $\randomProcess[\timePoint] - \E{\randomProcess[\timePoint + 1]}[\naturalFiltration{\timePoint}] \leq \delta$, and that
        
        \item there is some value $c \geq 0$ such that, for all $\timePoint < \firstHittingTime$, it holds that $\E{|\randomProcess[\timePoint + 1] - \randomProcess[\timePoint]|}[\naturalFiltration{\timePoint}][\big] \leq c$.
    \end{enumerate}
    Then
    \[
        \E{\firstHittingTime}[\randomProcess[0]] \geq \frac{\randomProcess[0]}{\delta}\ .
    \]
    
\end{theorem}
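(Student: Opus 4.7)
The plan is to mirror the strategy from the upper-bound proofs by constructing a suitable submartingale and invoking case~(\ref{item:optionalStoppingSubmartingale}) of the Optional Stopping Theorem. Specifically, I would define, for $\timePoint \leq \firstHittingTime$, the process $\transformedProcess[\timePoint] = \randomProcess[\timePoint] + \additiveDriftConstant\timePoint$. Then
\[
    \transformedProcess[\timePoint] - \E{\transformedProcess[\timePoint + 1]}[\naturalFiltration{\timePoint}] = \randomProcess[\timePoint] - \E{\randomProcess[\timePoint + 1]}[\naturalFiltration{\timePoint}] - \additiveDriftConstant \leq 0
\]
by assumption~(a), so that \transformedProcess[\timePoint] is a submartingale (the change from the natural filtration of \transformedProcess[\timePoint] to that of \randomProcess[\timePoint] is legitimate since the transformation is injective, as in the proof of Theorem~\ref{thm:addDriftBounded}).

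Next, I would verify the hypotheses of Theorem~\ref{thm:optionalStopping}. For the bounded-expected-step-size condition on \transformedProcess[\timePoint], note that $|\transformedProcess[\timePoint + 1] - \transformedProcess[\timePoint]| \leq |\randomProcess[\timePoint + 1] - \randomProcess[\timePoint]| + \additiveDriftConstant$, so by assumption~(b) we have $\E{|\transformedProcess[\timePoint + 1] - \transformedProcess[\timePoint]|}[\naturalFiltration{\timePoint}][\big] \leq c + \additiveDriftConstant$, as required. The integrability condition $\E{\firstHittingTime}[\randomProcess[0]] < \infty$ is treated by a case distinction: if $\E{\firstHittingTime}[\randomProcess[0]] = \infty$, then the conclusion $\E{\firstHittingTime}[\randomProcess[0]] \geq \randomProcess[0]/\additiveDriftConstant$ holds trivially for any finite value of~\randomProcess[0]; if $\randomProcess[0] \leq 0$, then $\firstHittingTime = 0$ and the bound also holds trivially, so the interesting regime is $\randomProcess[0] > 0$ and $\E{\firstHittingTime}[\randomProcess[0]] < \infty$.

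In this remaining regime, applying case~(\ref{item:optionalStoppingSubmartingale}) of Theorem~\ref{thm:optionalStopping} conditionally on~\randomProcess[0] yields $\E{\transformedProcess[\firstHittingTime]}[\randomProcess[0]] \geq \E{\transformedProcess[0]}[\randomProcess[0]] = \randomProcess[0]$. Since $\E{\firstHittingTime}[\randomProcess[0]] < \infty$ implies $\firstHittingTime < \infty$ almost surely, and since $\randomProcess[\firstHittingTime] \leq 0$ by definition of~\firstHittingTime, we obtain
\[
    \randomProcess[0] \leq \E{\randomProcess[\firstHittingTime] + \additiveDriftConstant\firstHittingTime}[\randomProcess[0]] = \E{\randomProcess[\firstHittingTime]}[\randomProcess[0]] + \additiveDriftConstant \E{\firstHittingTime}[\randomProcess[0]] \leq \additiveDriftConstant \E{\firstHittingTime}[\randomProcess[0]],
\]
and dividing by~\additiveDriftConstant gives the claim.

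The main obstacle, unlike in the upper-bound proofs of Theorems~\ref{thm:addDriftBounded} and~\ref{thm:addDriftBoundedSteps}, is not establishing $\E{\firstHittingTime} < \infty$ by an Azuma-type tail argument, but rather deciding how to handle the case where $\E{\firstHittingTime}[\randomProcess[0]]$ is genuinely infinite; fortunately this case makes the lower bound vacuous. A second point of care is keeping track of the event $\{\timePoint < \firstHittingTime\}$ in the drift and step-size hypotheses, as highlighted in Section~\ref{sec:prelims}, so that the submartingale relation for \transformedProcess[\timePoint] is only invoked up to time~\firstHittingTime and the application of Theorem~\ref{thm:optionalStopping} is formally justified.
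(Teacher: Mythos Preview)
Your proposal is correct and follows essentially the same route as the paper's own proof: a case distinction on whether $\E{\firstHittingTime}[\randomProcess[0]]$ is infinite, the submartingale $\transformedProcess[\timePoint] = \randomProcess[\timePoint] + \additiveDriftConstant\timePoint$, and an application of case~(\ref{item:optionalStoppingSubmartingale}) of Theorem~\ref{thm:optionalStopping} using $\randomProcess[\firstHittingTime] \leq 0$. Your explicit verification that $\E{|\transformedProcess[\timePoint + 1] - \transformedProcess[\timePoint]|}[\naturalFiltration{\timePoint}] \leq c + \additiveDriftConstant$ is in fact slightly more careful than the paper, which invokes the step-size hypothesis directly on~\randomProcess[\timePoint] rather than on~\transformedProcess[\timePoint].
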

\begin{proof}
    We make a case distinction with respect to $\E{\firstHittingTime}[\randomProcess[0]]$ being finite. If $\E{\firstHittingTime}[\randomProcess[0]]$ is infinite, then the theorem trivially holds. Thus, we now assume that $\E{\firstHittingTime}[\randomProcess[0]] < \infty$.
    
    Similar to the proof of Theorem~\ref{thm:addDriftBounded}, we define, for all $\timePoint < \firstHittingTime$, $\transformedProcess[\timePoint] = \randomProcess[\timePoint] + \delta\timePoint$, which is a submartingale, since
    \begin{align*}
        \transformedProcess[\timePoint] - \E{\transformedProcess[\timePoint + 1]}[\transformedProcess[0], \ldots, \transformedProcess[\timePoint]] &= \randomProcess[\timePoint] - \delta\timePoint - \E{\randomProcess[\timePoint + 1] - \delta(\timePoint + 1)}[\naturalFiltration{\timePoint}]\\
        &= \randomProcess[\timePoint] - \E{\randomProcess[\timePoint + 1]}[\naturalFiltration{\timePoint}] - \delta \leq 0\ ,
    \end{align*}
    as we assume that $\randomProcess[\timePoint] - \E{\randomProcess[\timePoint + 1]}[\naturalFiltration{\timePoint}] \leq \delta$ for all $\timePoint < \firstHittingTime$ and because, again, the transformation of~\randomProcess[\timePoint] to~\transformedProcess[\timePoint] is injective.
    
    Since we now assume that both $\E{\firstHittingTime}[\randomProcess[0]] < \infty$ and, further, that $\E{|\randomProcess[\timePoint + 1] - \randomProcess[\timePoint]|}[\naturalFiltration{\timePoint}][\big] \leq c$ for all $\timePoint < \firstHittingTime$, we can directly apply case~(\ref{item:optionalStoppingSubmartingale}) of Theorem~\ref{thm:optionalStopping} and get that $\E{\transformedProcess[\firstHittingTime]}[\randomProcess[0]] \geq \E{\transformedProcess[0]}[\randomProcess[0]]$. This yields, noting that $\randomProcess[\firstHittingTime] \leq 0$,
    \begin{align*}
        \randomProcess[0] &= \E{\randomProcess[0]}[\randomProcess[0]] = \E{\transformedProcess[0]}[\randomProcess[0]]\\
        &\leq \E{\transformedProcess[\firstHittingTime]}[\randomProcess[0]] = \E{\randomProcess[\firstHittingTime] + \delta\firstHittingTime}[\randomProcess[0]] = \E{\randomProcess[\firstHittingTime]}[\randomProcess[0]] + \delta\E{\firstHittingTime}[\randomProcess[0]] \leq \delta\E{\firstHittingTime}[\randomProcess[0]]\ .
    \end{align*}
    Thus, we get the desired bound by dividing by $\delta$.\qed
\end{proof}

Note that the step size has to be bounded in some way for a lower bound, as the following example shows.

\begin{example}
    \label{ex:additiveDriftLowerBound}
    Let $\delta \in (0,1)$, and let $(\randomProcess[\timePoint])_{\timePoint \in \Na}$ be a random process with $\randomProcess[0] = 2$ and, for all $t \in \Na$, $\randomProcess[\timePoint + 1] = 0$ with probability $1/2$ and $\randomProcess[\timePoint + 1] = 2\randomProcess[\timePoint] - 2\delta$ otherwise. Further, let~\firstHittingTime  denote the first point in time~\timePoint such that $\randomProcess[\timePoint] = 0$. Then~\firstHittingTime follows a geometric distribution with success probability $1/2$, which yields $\E{\firstHittingTime} = 2$. However, we have that $\randomProcess[\timePoint] - \E{\randomProcess[\timePoint + 1]}[\naturalFiltration{\timePoint}] = \delta$. If Theorem~\ref{thm:addDriftLowerBoundExpectedBoundedSteps} could be applied to this process (by neglecting the condition of the bounded step size), the theorem would yield that $\E{\firstHittingTime} \geq 2/\delta$, which is not true.
\end{example}

    \section{Variable Drift}
    \label{sec:variableDrift}
    
In contrast to additive drift, \emph{variable} drift means that the drift can depend on the current state of the process (while still being bounded independently of the time). Interestingly, these more flexible drift theorems can be derived by using additive drift. Intuitively, the reasoning behind this approach is to scale the search space such that the information relevant to the process's history cancels out.

It is important to note that variable drift theorems are commonly phrased such that the first-hitting time~\firstHittingTime denotes the first point in time such that the random process drops \emph{below} a certain value (our target)~-- it is not enough to hit that value. However, this restriction is not always necessary. Thus, we also consider the setting from Section~\ref{sec:additiveDrift}, where~\firstHittingTime denotes the first point in time such that we \emph{hit} our target. In this section, our target is no longer~$0$ but a value~\xmin.

In all of our theorems in this section, we make use of a set~$D$. This set contains (at least) all possible values that our process can take while not having reached the target yet. It is a formal necessity in order to calculate the bound of the first-hitting time (via an integral). However, when applying the theorem, it is usually sufficient to choose $D = \Re$ or $D = \Re_{\geq 0}$.

\medskip
The first variable drift theorem was proven by Johannsen~\cite{Johannsen10} and, independently in a different version, by Mitavskiy et al.~\cite{DBLP:journals/ijicc/MitavskiyRC09}. It was later refined by Rowe and Sudholt~\cite{RoweS14}. In all of these versions, bounded search spaces were used. Due to Theorem~\ref{thm:addDriftUnbounded}, we can drop this restriction.

\subsubsection{Going below the target.}
\renewcommand*{\d}{\mathrm{d}}

The following version of the theorem assumes that the process has to drop below the target, denoted by~\xmin. We provide the other version afterward.

\begin{theorem}[Upper Variable Drift, Unbounded, Below Target]
    \label{thm:varDriftUnbounded}
    Let $(\randomProcess[\timePoint])_{\timePoint \in \Na}$ be a random process over~\Re, $\xmin > 0$, and let $\firstHittingTime = \inf\{\timePoint \mid \randomProcess[\timePoint] < \xmin\}$. Additionally, let~$D$ denote the smallest real interval that contains at least all values $x \geq \xmin$ that, for all $\timePoint \leq \firstHittingTime$, any~\randomProcess[\timePoint] can take. Furthermore, suppose that
    \begin{enumerate}[label=(\alph*)]
        
        \item $\randomProcess[0] \geq \xmin$ and, for all $\timePoint \leq \firstHittingTime$, it holds that $\randomProcess[\timePoint] \geq 0$ and that
        
        \item there is a monotonically increasing function $h\colon D \to \Re^+$ such that, for all $\timePoint < \firstHittingTime$, we have $\randomProcess[\timePoint] - \E{\randomProcess[\timePoint + 1]}[\naturalFiltration{\timePoint}] \geq h(\randomProcess[\timePoint])$.
    \end{enumerate}
    Then
    \[
        \E{\firstHittingTime}[\randomProcess[0]] \leq \frac{\xmin}{h(\xmin)} + \int_{\xmin}^{\randomProcess[0]} \frac{1}{h(z)} \d z\ .
    \]
\end{theorem}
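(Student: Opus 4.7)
The plan is to reduce to Theorem~\ref{thm:addDriftUnbounded} by constructing a nonnegative transformation of \randomProcess[\timePoint] whose variable drift becomes an additive drift of at least~$1$. Define $g\colon [0,\infty) \to \Re_{\geq 0}$ by $g(x) = x/h(\xmin)$ for $0 \leq x \leq \xmin$ and $g(x) = \xmin/h(\xmin) + \int_{\xmin}^{x} 1/h(z)\,\d z$ for $x \geq \xmin$. Then $g$ is continuous, monotonically increasing, and concave, because $g'(x) = 1/h(\max(x,\xmin))$ is non-increasing (using that $h$ is monotonically increasing). Set $\transformedProcess[\timePoint] = g(\randomProcess[\timePoint])$ whenever $\randomProcess[\timePoint] \geq \xmin$, and $\transformedProcess[\timePoint] = 0$ otherwise. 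Then $\transformedProcess[\timePoint] \geq 0$ for every $\timePoint \leq \firstHittingTime$, and $\transformedProcess[\timePoint] \leq 0$ if and only if $\randomProcess[\timePoint] < \xmin$, so the first time \transformedProcess[\timePoint] drops to~$0$ coincides with \firstHittingTime; moreover, $\transformedProcess[0] = g(\randomProcess[0])$ equals the right-hand side of the claimed bound.

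The core step is to show that \transformedProcess[\timePoint] has additive drift at least~$1$. Fix $\timePoint < \firstHittingTime$, so $\randomProcess[\timePoint] \geq \xmin$ and $\transformedProcess[\timePoint] = g(\randomProcess[\timePoint])$. In either possible case for $\randomProcess[\timePoint+1]$ (above or below \xmin) one has $\transformedProcess[\timePoint+1] \leq g(\randomProcess[\timePoint+1])$, since $g \geq 0$. Thus, by Jensen's inequality applied to the concave function $g$,
\[
    \E{\transformedProcess[\timePoint+1]}[\naturalFiltration{\timePoint}] \leq \E{g(\randomProcess[\timePoint+1])}[\naturalFiltration{\timePoint}] \leq g\bigl(\E{\randomProcess[\timePoint+1]}[\naturalFiltration{\timePoint}]\bigr).
\]
The drift assumption gives $\E{\randomProcess[\timePoint+1]}[\naturalFiltration{\timePoint}] \leq \randomProcess[\timePoint] - h(\randomProcess[\timePoint])$, and the nonnegativity $\randomProcess[\timePoint+1] \geq 0$ forces $\randomProcess[\timePoint] - h(\randomProcess[\timePoint]) \geq 0$, so the argument lies in the domain of $g$. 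Monotonicity of $g$ then yields
\[
    \transformedProcess[\timePoint] - \E{\transformedProcess[\timePoint+1]}[\naturalFiltration{\timePoint}] \geq g(\randomProcess[\timePoint]) - g\bigl(\randomProcess[\timePoint] - h(\randomProcess[\timePoint])\bigr) = \int_{\randomProcess[\timePoint] - h(\randomProcess[\timePoint])}^{\randomProcess[\timePoint]} g'(z)\,\d z \geq 1,
\]
where the last inequality uses $g'(z) = 1/h(\max(z,\xmin)) \geq 1/h(\randomProcess[\timePoint])$ for all $z$ in the integration range (since $\max(z,\xmin) \leq \randomProcess[\timePoint]$ and $h$ is monotonically increasing).

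Applying Theorem~\ref{thm:addDriftUnbounded} to $\transformedProcess[\timePoint]$ with $\delta = 1$ then delivers $\E{\firstHittingTime}[\randomProcess[0]] \leq g(\randomProcess[0])$, which is exactly the claimed bound. The main subtle point is the piecewise definition of $g$: extending it linearly on $[0,\xmin]$ with slope $1/h(\xmin)$, rather than simply setting $g \equiv 0$ there, is precisely what preserves concavity across \xmin and keeps $g'(z) \geq 1/h(\randomProcess[\timePoint])$ even when $\randomProcess[\timePoint] - h(\randomProcess[\timePoint])$ dips below \xmin; without this care, either the Jensen step or the derivative comparison would fail in the regime where the process can drop far in a single step.
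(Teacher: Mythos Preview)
Your proof is correct and follows the same overall strategy as the paper: define the potential $g(x)=\xmin/h(\xmin)+\int_{\xmin}^{x}1/h(z)\,\d z$ for $x\ge\xmin$, show the transformed process has additive drift at least~$1$, and invoke Theorem~\ref{thm:addDriftUnbounded}.

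The one genuine difference is in how the drift bound is verified. The paper sets $g\equiv 0$ on $[0,\xmin)$ and proves the \emph{pointwise} inequality $g(x)-g(y)\ge (x-y)/h(x)$ for all $x\ge\xmin$ and $y\ge 0$ by a short case split ($y\ge\xmin$ versus $y<\xmin$); taking conditional expectations and linearity then immediately gives drift~$\ge 1$. You instead extend $g$ linearly on $[0,\xmin]$ with slope $1/h(\xmin)$ so that $g$ is globally concave, apply conditional Jensen to pass to $g\bigl(\E{\randomProcess[\timePoint+1]}[\naturalFiltration{\timePoint}]\bigr)$, and finish with the integral estimate. Both are clean; the paper's pointwise route is slightly more direct (no need for the auxiliary observation $\randomProcess[\timePoint]-h(\randomProcess[\timePoint])\ge 0$ or the linear extension), while your argument makes the underlying concavity structure explicit and handles all cases uniformly via Jensen rather than by case analysis.
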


\subsubsection{Hitting the target.}

As mentioned before, it is not always necessary to drop below the target. For the additive drift, for example, we are interested in the first time reaching the target. Interestingly, the proof for the following theorem is straightforward, as it is almost the same as the proof of Theorem~\ref{thm:varDriftUnbounded}. Intuitively, the waiting time for getting below the target, once it is reached, is eliminated from the expected first-hitting time. However, it is important to note that it is now not allowed to get below the target.

\begin{theorem}[Upper Variable Drift, Unbounded, Hitting Target]
    \label{thm:varDriftUnboundedNoGap}
    Let $(\randomProcess[\timePoint])_{\timePoint \in \Na}$ be a random process over~\Re, $\xmin \geq 0$, and let $\firstHittingTime = \inf\{\timePoint \mid \randomProcess[\timePoint] \leq \xmin\}$. Additionally, let~$D$ denote the smallest real interval that contains at least all values $x \geq \xmin$ that, for all $\timePoint \leq \firstHittingTime$, any~\randomProcess[\timePoint] can take. Furthermore, suppose that,
    \begin{enumerate}[label=(\alph*)]
        \item for all $\timePoint \leq \firstHittingTime$, it holds that $\randomProcess[\timePoint] \geq \xmin$ and that
        
        \item there is a monotonically increasing function $h\colon D \to \Re^+$ such that, for all $\timePoint < \firstHittingTime$, we have $\randomProcess[\timePoint] - \E{\randomProcess[\timePoint + 1]}[\naturalFiltration{\timePoint}] \geq h(\randomProcess[\timePoint])$.
    \end{enumerate}
    Then
    \[
        \E{\firstHittingTime}[\randomProcess[0]] \leq \int_{\xmin}^{\randomProcess[0]} \frac{1}{h(z)} \d z\ .
    \]
\end{theorem}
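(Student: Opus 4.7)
The plan is to reduce the variable drift setting to the additive drift setting and then invoke Theorem~\ref{thm:addDriftUnbounded}. Define $g(x) = \int_{\xmin}^{x} \frac{1}{h(z)} \d z$ for $x \in D \cap [\xmin, \infty)$ and set $\transformedProcess[\timePoint] = g(\randomProcess[\timePoint])$. Since $h$ is positive and monotonically increasing, the integrand $1/h$ is positive and monotonically decreasing, which makes $g$ strictly increasing and concave, with $g(\xmin) = 0$.

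Next, I would verify the prerequisites of Theorem~\ref{thm:addDriftUnbounded} for $\transformedProcess[\timePoint]$. By condition~(a) we have $\randomProcess[\timePoint] \geq \xmin$ for every $\timePoint \leq \firstHittingTime$, so monotonicity of $g$ yields $\transformedProcess[\timePoint] \geq 0$ on that range. For $\timePoint < \firstHittingTime$ the definition of $\firstHittingTime$ forces $\randomProcess[\timePoint] > \xmin$, hence $\transformedProcess[\timePoint] > 0$, while $\randomProcess[\firstHittingTime] = \xmin$ gives $\transformedProcess[\firstHittingTime] = 0$. Consequently, the first-hitting time $\inf\{\timePoint \mid \transformedProcess[\timePoint] \leq 0\}$ of the transformed process coincides with $\firstHittingTime$.

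The key step is to show that $\transformedProcess[\timePoint]$ has additive drift at least $1$. Fix any $\timePoint < \firstHittingTime$. Concavity of $g$ at the known value $\randomProcess[\timePoint]$, together with $g'(\randomProcess[\timePoint]) = 1/h(\randomProcess[\timePoint])$, gives the pointwise tangent-line bound $g(\randomProcess[\timePoint + 1]) \leq g(\randomProcess[\timePoint]) + (\randomProcess[\timePoint + 1] - \randomProcess[\timePoint])/h(\randomProcess[\timePoint])$. Taking conditional expectation given $\naturalFiltration{\timePoint}$ and exploiting that $h(\randomProcess[\timePoint])$ is a constant under this conditioning yields
\[
    \transformedProcess[\timePoint] - \E{\transformedProcess[\timePoint + 1]}[\naturalFiltration{\timePoint}] \geq \frac{\randomProcess[\timePoint] - \E{\randomProcess[\timePoint + 1]}[\naturalFiltration{\timePoint}]}{h(\randomProcess[\timePoint])} \geq \frac{h(\randomProcess[\timePoint])}{h(\randomProcess[\timePoint])} = 1.
\]
Applying Theorem~\ref{thm:addDriftUnbounded} to $\transformedProcess[\timePoint]$ with $\delta = 1$ then yields $\E{\firstHittingTime}[\randomProcess[0]] \leq \transformedProcess[0] = g(\randomProcess[0]) = \int_{\xmin}^{\randomProcess[0]} \frac{1}{h(z)} \d z$, which is exactly the claimed bound.

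I expect the main subtlety to be the concavity step: to invoke the tangent-line inequality we need both $\randomProcess[\timePoint]$ and $\randomProcess[\timePoint + 1]$ to lie in $[\xmin, \infty) \cap D$, where $g$ is concave and well-defined. This is ensured because $\timePoint < \firstHittingTime$ implies $\timePoint + 1 \leq \firstHittingTime$, so condition~(a) applies to both indices. Compared with Theorem~\ref{thm:varDriftUnbounded}, the bound lacks the extra additive term $\xmin/h(\xmin)$ precisely because the transformation $g$ already vanishes at the target and no further ``drop below'' step has to be budgeted in.
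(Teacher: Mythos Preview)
Your proposal is correct and follows essentially the same route as the paper: define $g(x)=\int_{\xmin}^{x}1/h(z)\,\d z$, show the transformed process $g(\randomProcess[\timePoint])$ has additive drift at least~$1$, and invoke Theorem~\ref{thm:addDriftUnbounded}. The only cosmetic difference is that the paper derives $g(x)-g(y)\geq (x-y)/h(x)$ by a two-case integral estimate using the monotonicity of~$h$, whereas you phrase the same inequality as the tangent-line bound of a concave function; note that writing $g'(\randomProcess[\timePoint])=1/h(\randomProcess[\timePoint])$ tacitly assumes differentiability at that point, but the inequality you need holds anyway by the elementary integral bounds, so no gap arises.
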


    \section{Multiplicative Drift}
    \label{sec:multiplicativeDrift}
    
A special case of variable drift is \emph{multiplicative} drift, where the drift can be bounded by a multiple of the most recent value in the history of the process. As before, we provide upper bounds in the two versions of either dropping below the target or hitting it. In this setting, it can be intuitively argued why the version of dropping below the target is useful: consider a sequence of nonnegative numbers that halves its current value each time step. This process will never reach~$0$ within finite time. However, it drops below any value greater than~$0$.

Both upper bounds we state are simple applications of the corresponding variable drift theorems from Section~\ref{sec:variableDrift}.

\subsubsection{Going below the target.}

Corollary~\ref{cor:MultiDriftUnbounded} has first been stated by Doerr et al.~\cite{DBLP:conf/gecco/DoerrJW10} using finite state spaces. However, a closer look at the proof shows that this restriction is not necessary.

\begin{corollary}[Upper Multiplicative Drift, Unbounded, Below Target]
    \label{cor:MultiDriftUnbounded}
    Let $(\randomProcess[\timePoint])_{\timePoint \in \Na}$ be a random process over~\Re, $\xmin > 0$, and let $\firstHittingTime = \inf\{\timePoint \mid \randomProcess[\timePoint] < \xmin\}$. Furthermore, suppose that
    \begin{enumerate}[label=(\alph*)]
        \item $\randomProcess[0] \geq \xmin$ and, for all $\timePoint \leq \firstHittingTime$, it holds that $\randomProcess[\timePoint] \geq 0$, and that
        
        \item there is some value $\delta > 0$ such that, for all $\timePoint < \firstHittingTime$, it holds that $\randomProcess[\timePoint] - \E{\randomProcess[\timePoint + 1]}[\naturalFiltration{\timePoint}][\big] \geq \delta\randomProcess[\timePoint]$.
    \end{enumerate}
    Then
    \[
        \E{\firstHittingTime}[\randomProcess[0]] \leq \frac{1 + \ln\left(\frac{\randomProcess[0]}{\xmin}\right)}{\delta}\ .
    \]
\end{corollary}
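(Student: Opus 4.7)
The plan is to derive this as an immediate specialization of Theorem~\ref{thm:varDriftUnbounded} by choosing the variable-drift function to be the linear function $h(z) = \delta z$. This is the standard reduction from multiplicative to variable drift, and all of the real work has already been done inside Theorem~\ref{thm:varDriftUnbounded}; what remains is merely a verification of hypotheses and the evaluation of one logarithmic integral.

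First, I would check the hypotheses of Theorem~\ref{thm:varDriftUnbounded} for $h(z) = \delta z$. The condition $\randomProcess[0] \geq \xmin$ and the nonnegativity of the process are inherited directly from the corollary. The interval~$D$ from Theorem~\ref{thm:varDriftUnbounded} consists of values at least $\xmin$, so $D \subseteq [\xmin, \infty)$; on this interval $h(z) = \delta z$ is strictly positive (since $\xmin > 0$ and $\delta > 0$) and monotonically increasing, so it is a valid choice of variable-drift function $D \to \Re^+$. Finally, hypothesis~(b) of the corollary, $\randomProcess[\timePoint] - \E{\randomProcess[\timePoint + 1]}[\naturalFiltration{\timePoint}] \geq \delta \randomProcess[\timePoint]$, is exactly $h(\randomProcess[\timePoint])$, so hypothesis~(b) of Theorem~\ref{thm:varDriftUnbounded} is met.

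Applying Theorem~\ref{thm:varDriftUnbounded} then yields
\[
    \E{\firstHittingTime}[\randomProcess[0]] \leq \frac{\xmin}{h(\xmin)} + \int_{\xmin}^{\randomProcess[0]} \frac{1}{h(z)} \, \d z = \frac{1}{\delta} + \frac{1}{\delta} \int_{\xmin}^{\randomProcess[0]} \frac{1}{z} \, \d z = \frac{1 + \ln\left(\randomProcess[0]/\xmin\right)}{\delta},
\]
which is the claimed bound.

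There is no real obstacle here; the only thing that requires a moment of thought is confirming that $h(z) = \delta z$ is admissible on the domain~$D$ prescribed by the variable-drift theorem, in particular that $h$ is positive there. This is automatic because $D$ only contains values $\geq \xmin > 0$. All the nontrivial work—bounding the expected hitting time via an integral, handling the unboundedness of the search space via Theorem~\ref{thm:addDriftUnbounded}, and coping with the fact that the process need not land exactly at $\xmin$—is already encapsulated in Theorem~\ref{thm:varDriftUnbounded}.
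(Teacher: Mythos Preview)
Your proposal is correct and matches the paper's own proof essentially verbatim: both define $h(x) = \delta x$, note that it is monotonically increasing and positive on $[\xmin,\infty)$, verify the drift condition, apply Theorem~\ref{thm:varDriftUnbounded}, and evaluate the resulting integral to obtain the stated bound.
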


\subsubsection{Hitting the target.}

By applying Theorem~\ref{thm:varDriftUnboundedNoGap} instead of Theorem~\ref{thm:varDriftUnbounded}, we get the following theorem. As in the case of Theorem~\ref{thm:varDriftUnboundedNoGap}, the process now has to be lower-bounded by~\xmin.

\begin{corollary}[Upper Multiplicative Drift, Unbounded, Hitting Target]
    \label{cor:MultiDriftUnboundedNoGap}
    Let $(\randomProcess[\timePoint])_{\timePoint \in \Na}$ be a random process over~\Re, $\xmin > 0$, and let $\firstHittingTime = \inf\{\timePoint \mid \randomProcess[\timePoint] \leq \xmin\}$. Furthermore, suppose that,
    \begin{enumerate}[label=(\alph*)]
        \item for all $\timePoint \leq \firstHittingTime$, it holds that $\randomProcess[\timePoint] \geq \xmin$, and that
        
        \item there is some value $\delta > 0$ such that, for all $\timePoint < \firstHittingTime$, it holds that $\randomProcess[\timePoint] - \E{\randomProcess[\timePoint + 1]}[\naturalFiltration{\timePoint}][\big] \geq \delta\randomProcess[\timePoint]$.
    \end{enumerate}
    Then
    \[
        \E{\firstHittingTime}[\randomProcess[0]] \leq \frac{\ln\left(\frac{\randomProcess[0]}{\xmin}\right)}{\delta}\ .
    \]
\end{corollary}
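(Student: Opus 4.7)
The plan is to obtain Corollary~\ref{cor:MultiDriftUnboundedNoGap} as an immediate specialization of Theorem~\ref{thm:varDriftUnboundedNoGap} by taking the variable-drift function to be $h(x) = \delta x$. All of the structural work (martingale transformation, optional stopping, etc.) has already been done inside Theorem~\ref{thm:varDriftUnboundedNoGap}, so the task here is purely to match hypotheses and evaluate one elementary integral.

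First I would verify the hypotheses of Theorem~\ref{thm:varDriftUnboundedNoGap} for this choice of $h$. Since $\delta > 0$, the function $h(x) = \delta x$ maps into $\Re^+$ and is strictly monotonically increasing on every real interval, in particular on any admissible set $D$ of values that $\randomProcess[\timePoint]$ can take up to time $\firstHittingTime$. Assumption~(a) of the corollary, namely $\randomProcess[\timePoint] \geq \xmin$ for all $\timePoint \leq \firstHittingTime$, is exactly the first hypothesis of Theorem~\ref{thm:varDriftUnboundedNoGap}; together with $\xmin > 0$ it also guarantees $h(\randomProcess[\timePoint]) > 0$ throughout, so the integrand $1/h(z)$ will be well-defined on the integration range. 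Assumption~(b) of the corollary, $\randomProcess[\timePoint] - \E{\randomProcess[\timePoint + 1]}[\naturalFiltration{\timePoint}] \geq \delta\randomProcess[\timePoint]$ for $\timePoint < \firstHittingTime$, is precisely $\randomProcess[\timePoint] - \E{\randomProcess[\timePoint + 1]}[\naturalFiltration{\timePoint}] \geq h(\randomProcess[\timePoint])$ and thus matches the second hypothesis.

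Applying Theorem~\ref{thm:varDriftUnboundedNoGap} then yields
\[
    \E{\firstHittingTime}[\randomProcess[0]] \leq \int_{\xmin}^{\randomProcess[0]} \frac{1}{\delta z}\,\mathrm{d}z = \frac{\ln(\randomProcess[0]) - \ln(\xmin)}{\delta} = \frac{\ln\bigl(\randomProcess[0]/\xmin\bigr)}{\delta}\,,
\]
which is the desired bound. There is no genuine obstacle: the entire argument is a template application of the variable drift theorem, and the only computation is the antiderivative $\int 1/(\delta z)\,\mathrm{d}z = \ln(z)/\delta$. The one conceptual point worth flagging, as done already in the remarks preceding the corollary, is that the ``hitting the target'' variant forces $\randomProcess[\timePoint] \geq \xmin > 0$ rather than allowing the process to undershoot; this is what keeps $h(\randomProcess[\timePoint])$ bounded away from $0$ and is implicitly responsible for the absence of the extra $\xmin/h(\xmin)$ term that appears in Theorem~\ref{thm:varDriftUnbounded} and Corollary~\ref{cor:MultiDriftUnbounded}.
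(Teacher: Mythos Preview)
Your proposal is correct and matches the paper's own proof essentially verbatim: the paper also takes $h(x)=\delta x$, checks monotonicity and the drift condition, and then applies Theorem~\ref{thm:varDriftUnboundedNoGap} to obtain the integral $\int_{\xmin}^{\randomProcess[0]} 1/(\delta z)\,\mathrm{d}z = \ln(\randomProcess[0]/\xmin)/\delta$.
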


Again, we provide an example that shows that the bounds above are as tight as possible, up to constant factors, for the range of processes we consider.
The example describes a process that decreases deterministically, that is, it has a variance of~$0$.

\begin{example}
    \label{ex:multiDriftUpperBound}
    Let $\delta \in (0, 1)$ be a value bounded away from~$1$. Consider the process $(\randomProcess[\timePoint])_{\timePoint \in \Na}$, with $\randomProcess[0] > 1$, that decreases each step deterministically such that $\randomProcess[\timePoint + 1] = (1 - \delta)\randomProcess[\timePoint]$ holds. Let~\firstHittingTime denote the first point in time such that the process drops below~$1$. Thus, we get $\firstHittingTime = \Theta(-\log_{(1 - \delta)}\randomProcess[0]) = \Theta\big(-\ln(\randomProcess[0])/\ln(1 - \delta)\big) = \Theta\big(\ln(\randomProcess[0])/\delta\big)$, where the last equation makes use of the Taylor expansion of $\ln(1 - \delta) = \Theta(-\delta)$, as $1 - \delta$ does not converge to~$0$, by assumption.
\end{example}

    \bibliographystyle{splncs03}
    \bibliography{ElegantDrift}

\begin{thebibliography}{10}
\providecommand{\url}[1]{\texttt{#1}}
\providecommand{\urlprefix}{URL }

\bibitem{azuma1967}
Azuma, K.: Weighted sums of certain dependent random variables. Tohoku
  Mathematical Journal  19(3),  357--367 (1967)

\bibitem{DBLP:conf/ppsn/DoerrG10a}
Doerr, B., Goldberg, L.A.: Adaptive drift analysis. Algorithmica  65(1),
  224--250 (2013)

\bibitem{DBLP:conf/gecco/DoerrJW10}
Doerr, B., Johannsen, D., Winzen, C.: Multiplicative drift analysis.
  Algorithmica  64(4),  673--697 (2012)

\bibitem{doerr2017bounding}
Doerr, B., Kötzing, T., Lagodzinski, J.A.G., Lengler, J.: Bounding bloat in
  genetic programming. In: Proc. of GECCO'17. pp. 921--928 (2017)

\bibitem{grimmett2001probability}
Grimmett, G.R., Stirzaker, D.R.: Probability and random processes. Oxford
  University Press (2001)

\bibitem{hajek1982hitting}
Hajek, B.: Hitting-time and occupation-time bounds implied by drift analysis
  with applications. Advances in Applied probability  14(3),  502--525 (1982)

\bibitem{DBLP:journals/ai/HeY01}
He, J., Yao, X.: Drift analysis and average time complexity of evolutionary
  algorithms. Artificial Intelligence  127(1),  57--85 (2001)

\bibitem{DBLP:journals/nc/HeY04}
He, J., Yao, X.: A study of drift analysis for estimating computation time of
  evolutionary algorithms. Natural Computing  3(1),  21--35 (2004)

\bibitem{Johannsen10}
Johannsen, D.: Random combinatorial structures and randomized search
  heuristics. Ph.D. thesis, Universität des Saarlandes (2010),
  \url{http://scidok.sulb.uni-saarland.de/volltexte/2011/3529/pdf/Dissertation
  3166 Joha Dani 2010.pdf}

\bibitem{DBLP:journals/algorithmica/Kotzing16}
Kötzing, T.: Concentration of first hitting times under additive drift.
  Algorithmica  75(3),  490--506 (2016)

\bibitem{Lehre2014}
Lehre, P.K., Witt, C.: Concentrated hitting times of randomized search
  heuristics with variable drift, pp. 686--697. Springer International
  Publishing (2014)

\bibitem{DBLP:journals/corr/abs-1712-00964}
Lengler, J.: Drift analysis. CoRR  abs/1712.00964 (2017),
  \url{http://arxiv.org/abs/1712.00964}

\bibitem{DBLP:journals/ijicc/MitavskiyRC09}
Mitavskiy, B., Rowe, J.E., Cannings, C.: Theoretical analysis of local search
  strategies to optimize network communication subject to preserving the total
  number of links. International Journal of Intelligent Computing and
  Cybernetics  2(2),  243--284 (2009)

\bibitem{Motwani:1995:RA:211390}
Motwani, R., Raghavan, P.: Randomized Algorithms. Cambridge University Press
  (1995)

\bibitem{OlivetoW11}
Oliveto, P.S., Witt, C.: Simplified drift analysis for proving lower bounds in
  evolutionary computation. Algorithmica  59(3),  369--386 (2011)

\bibitem{OlivetoW12}
Oliveto, P.S., Witt, C.: Erratum: simplified drift analysis for proving lower
  bounds in evolutionary computation. CoRR  abs/1211.7184 (2012),
  \url{http://arxiv.org/abs/1211.7184}

\bibitem{RoweS14}
Rowe, J.E., Sudholt, D.: The choice of the offspring population size in the (1,
  {\(\lambda\)}) evolutionary algorithm. Theoretical Computer Science  545,
  20--38 (2014)

\bibitem{DBLP:journals/ec/Semenov03}
Semenov, M.A., Terkel, D.A.: Analysis of convergence of an evolutionary
  algorithm with self-adaptation using a stochastic lyapunov function.
  Evolutionary Computation  11(4),  363--379 (2003)

\end{thebibliography}
    
    \clearpage
    \appendix
\section{Appendix}

\subsection{Proof of Theorem~\ref{thm:addDriftBoundedSteps}}

Before we prove Theorem~\ref{thm:addDriftBoundedSteps}, we state and prove the following lemma, which we are then going to use in the proof of Theorem~\ref{thm:addDriftBoundedSteps}.

\begin{lemma}
    \label{lem:convergenceLemma}
    Let $X$ be a random variable over $\Na$ and $(X_n)_{n \in \Na}$ a sequence of random variables over $\Na$. If, for all $x \in \Na$, it holds that $\Pr{X=x} \leq \lim_{n \rightarrow \infty} \Pr{X_n=x}$, then $\E{X} \leq \lim_{n \rightarrow \infty} \E{X_n}$.
\end{lemma}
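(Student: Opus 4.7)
The plan is to exploit the fact that for a $\Na$-valued random variable $X$ we have the identity $\E{X} = \sum_{x \in \Na} x \Pr{X = x}$, which turns the problem into a question about exchanging a limit with an infinite sum. Since every term $x \Pr{X_n = x}$ is nonnegative, a monotone truncation argument suffices and Fatou or dominated convergence never has to be invoked explicitly.

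Concretely, I would first fix $N \in \Na$ and bound the partial sum by applying the pointwise hypothesis termwise: because each coefficient $x \geq 0$, we get
\[
    \sum_{x=0}^{N} x \Pr{X = x} \leq \sum_{x=0}^{N} x \lim_{n \to \infty} \Pr{X_n = x}\ .
\]
The right-hand side is a finite linear combination of convergent sequences, so linearity of limits lets us pull the limit outside, yielding $\lim_{n \to \infty} \sum_{x=0}^{N} x \Pr{X_n = x}$. Since all the dropped tail terms $x \Pr{X_n = x}$ with $x > N$ are nonnegative, this truncated expectation is at most $\E{X_n}$ for every $n$, and hence at most $\lim_{n \to \infty} \E{X_n}$.

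Finally, I would let $N \to \infty$ on the left; by monotone convergence for series of nonnegative terms, the partial sums $\sum_{x=0}^{N} x \Pr{X = x}$ increase to $\E{X}$, which gives the desired inequality $\E{X} \leq \lim_{n \to \infty} \E{X_n}$.

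The only delicate point is the exchange of the infinite sum with the limit over $n$, and the whole purpose of the truncation is to defer that exchange until after we have already bounded the finite partial sum by $\lim_{n \to \infty} \E{X_n}$, at which stage only the much simpler monotone limit in $N$ remains. Both sides are allowed to be $+\infty$, and the inequality is to be read in the extended reals; the argument goes through unchanged in that case, because the bound $\sum_{x=0}^{N} x \Pr{X = x} \leq \lim_{n \to \infty} \E{X_n}$ holds uniformly in $N$.
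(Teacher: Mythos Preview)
Your argument is correct and is in fact cleaner than the paper's. Both proofs ultimately reduce to controlling a finite truncation $\sum_{x\le N} x\Pr{X_n=x}$ for large $n$, but they organise the estimates differently. The paper first establishes an auxiliary fact you never need, namely that the pointwise hypothesis $\Pr{X=x}\le\lim_n\Pr{X_n=x}$ forces equality (using that a strict excess at some $x'$ would push the total mass of $X_n$ above~$1$); it then runs a contradiction: assuming $\E{X}>a=\lim_n\E{X_n}$, it picks $k$ with $\sum_{i\le k} i\,\Pr{X=i}\ge a+\varepsilon$, chooses $n$ so large that each $\Pr{X_n=i}$ is within $\varepsilon/(i\,2^i)$ of $\Pr{X=i}$, and deduces $\E{X_n}>a$ with a fixed margin $\varepsilon/2^k$, contradicting the definition of $a$. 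Your direct route---bound the $N$-truncation termwise, pass the finite sum through the limit in $n$, dominate by $\E{X_n}$, then let $N\to\infty$---avoids both the preliminary equality claim and the contradiction scaffolding; what it gains is brevity, while the paper's detour yields the side information that the pointwise limits actually coincide with $\Pr{X=x}$, a fact not used anywhere else. One shared implicit assumption worth flagging: both arguments read $\lim_n\E{X_n}$ as existing in the extended reals (your inequality in fact goes through with $\liminf$ on the right, since $a_n\le b_n$ and $\lim a_n$ existing give $\lim a_n\le\liminf b_n$).
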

\begin{proof}
    First we show, for all $x$, that the condition $\Pr{X=x} \leq \lim_{n \rightarrow \infty} \Pr{X_n=x}$ implies $\Pr{X=x} = \lim_{n \rightarrow \infty} \Pr{X_n=x}$. Assume, by way of contradiction, that there is an $\varepsilon > 0$ and an $x' \in \Na$ such that $\lim_{n \to \infty} \Pr{X_n = x'} = \Pr{X = x'} + \varepsilon$. Let $k \geq x'$ be such that $\sum_{x = 0}^{k} \Pr{X = x} > 1 - \varepsilon/2$. Since $X_n$ converges to~$X$, choose an $n_0 \in \Na$ and a $\delta \in \Re$ with $0 \leq \delta < \varepsilon/\big(2(k + 1)\big)$ such that, for all $x \in \{0, \ldots, k\}$, $\Pr{X_{n_0} = x} \geq \Pr{X = x} - \delta$ if $x \neq x'$, and $\Pr{X_{n_0} = x'} \geq \Pr{X = x'} + \varepsilon - \delta$ otherwise. Then we have
    \begin{align*}
        \sum_{x = 0}^{k} \Pr{X_{n_0} = x} > 1 - \frac{\varepsilon}{2} - (k + 1)\delta + \varepsilon = 1 - (k + 1)\delta + \frac{\varepsilon}{2} > 1\ ,
    \end{align*}
    since $\varepsilon/2 > (k + 1)\delta$. This contradicts that $X_{n_0}$ follows a probability distribution. Thus, for all $x \in \Na$, $\Pr{X=x} = \lim_{n \rightarrow \infty} \Pr{X_n=x}$.
    
    Now, let $a = \lim_{n \rightarrow \infty} \E{X_n}$.
    Suppose, by way of contradiction, $\E{X} > a$. Then there is a $k$ and an $\varepsilon > 0$ such that
    \[
    \sum_{i=1}^k i \cdot \Pr{X = i} \geq a + \varepsilon\ .
    \]
    Let $n$ be large enough such that, for all $i$ with $1 \leq i \leq k$, $\big|\Pr{X_n = i} - \Pr{X = i}\big| \leq \varepsilon/(i \cdot 2^i)$. We now have
    \begin{align*}
        \E{X_n} &= \sum_{i=1}^\infty i \cdot \Pr{X_n = i} \geq \sum_{i=1}^k i \cdot \Pr{X_n = i}\\
        &\geq \left(\sum_{i=1}^k i \cdot \left(\Pr{X = i} - \frac{\varepsilon}{i \cdot 2^i}\right)\right)\\
        &=    \left(\sum_{i=1}^k i \cdot \Pr{X = i}\right) - \sum_{i=1}^k \frac{\varepsilon}{2^i}\\
        &> \left(\sum_{i=1}^k i \cdot \Pr{X = i}\right) - \varepsilon \geq a\ ,
    \end{align*}
    a contradiction.\qed
\end{proof}

\begin{proof}[of Theorem~\ref{thm:addDriftBoundedSteps}]
    Let~$a$ be any value such that $\Pr{\randomProcess[0] \leq a} > 0$. For any $z > a$ and any $\timePoint \in \Na$, let $A_{z, \timePoint}$ be the event that, for all $\specialTimePoint \leq \timePoint$, $\{\randomProcess[\specialTimePoint] \leq z\}$. Consider the process $(\randomProcess[\timePoint][z])_{\timePoint \in \Na}$ with $\randomProcess[\timePoint][z] = \randomProcess[\timePoint]$ if $A_{z, \timePoint}$ is true and $\randomProcess[\timePoint][z] = 0$ otherwise. Let $\firstHittingTime[z]$ be the first-hitting time of~$0$ of this modified process. Then the process is a random process on $[0, z]$ with a drift of at least~$\delta$ toward~$0$, so Theorem~\ref{thm:addDriftBounded} yields $\E{\firstHittingTime[z]}[\randomProcess[0][z]][\big] \leq \randomProcess[0][z]/\delta$. Note that $\randomProcess[0][z] \leq \randomProcess[0]$, since the values of $\randomProcess[0][z]$ and $\randomProcess[0]$ either coincide (if $A_{z, \timePoint}$ is true) or (if $A_{z, \timePoint}$ is false) $\randomProcess[0][z] = 0 \leq \randomProcess[0]$, since $\randomProcess[\timePoint]$ is non-negative for all $\timePoint \leq \firstHittingTime$. Further, the expected value of $\firstHittingTime[z]$ is the same when conditioning on $\randomProcess[0]$ or $\randomProcess[0][z]$, since $A_{z, \timePoint}$ is defined with respect to $\randomProcess[\timePoint]$. Thus, $\E{\firstHittingTime[z]}[\randomProcess[0]] = \E{\firstHittingTime[z]}[\randomProcess[0][z]][\big] \leq \randomProcess[0]/\delta$.
    
    Since the step width of~\randomProcess is bounded by $c$, we have, for all $z \geq a + ck$, that $\Pr{\firstHittingTime[z] = k}[\randomProcess[0] \leq a][\big] = \Pr{\firstHittingTime = k}[\randomProcess[0] \leq a]$: during~$k$ steps, both processes cannot exceed~$z$. Thus, they are the same, by construction. Since~$a$ is arbitrary, it follows that $\Pr{\firstHittingTime[z] = k}[][\big] = \Pr{\firstHittingTime = k}$. Using Lemma~\ref{lem:convergenceLemma}, we now see $\E{T}[\randomProcess[0]] \leq \lim_{z \to \infty} \E{\firstHittingTime[z]}[\randomProcess[0]][\big] \leq \randomProcess[0]/\delta$.\qed
\end{proof}

\subsection{Proof of Theorem~\ref{thm:addDriftUnbounded}}

\begin{proof}[of Theorem~\ref{thm:addDriftUnbounded}]
    We use the same approach as in the proof of Theorem~\ref{thm:addDriftBoundedSteps}. Thus, we use the same notation of~$a$, $A_{z, \timePoint}$, and $\randomProcess[\timePoint][z]$. However, this time, we cannot bound the step size of~\randomProcess deterministically. Thus, we do so probabilistically. In the following, we condition on the event $\{\randomProcess[0] \leq a\}$ without denoting this explicitly.
    
    By the law of total probability, we have, for all $k \in \Na$, all $z > a$, and all $\timePoint \geq k$,
    \begin{align*}
        \Pr{\firstHittingTime = k} &= \Pr{\firstHittingTime = k}[A_{z, \timePoint}]\Pr{A_{z, \timePoint}} + \Pr{\firstHittingTime = k}[\overline{A_{z, \timePoint}}]\Pr{\overline{A_{z, \timePoint}}}\\
        &\leq \Pr{\{\firstHittingTime = k\} \cap A_{z, \timePoint}} + \Pr{\overline{A_{z, \timePoint}}}\ .
    \end{align*}
    Note that $\Pr{\firstHittingTime = k}[A_{z, \timePoint}] = \Pr{\firstHittingTime[z] = k}[A_{z, \timePoint}]$, as $\randomProcess[][z] = \randomProcess$, due to the condition~$A_{z, \timePoint}$. Thus,
    \begin{align*}
        \Pr{\firstHittingTime = k} &\leq \Pr{\{\firstHittingTime[z]  = k\} \cap A_{z, \timePoint}} + \Pr{\overline{A_{z, \timePoint}}}\\
        &\leq \Pr{\firstHittingTime[z]  = k}[][\big] + \Pr{\overline{A_{z, \timePoint}}}\ .
    \end{align*}
    We now show that $\Pr{\overline{A_{z, \timePoint}}}[][\big]$ goes to~$0$ as~$z$ goes to infinity. This will establish $\Pr{\firstHittingTime = k} \leq \lim_{z \to \infty} \Pr{\firstHittingTime[z] = k}[][\big]$.
    
    Due to Markov's inequality, we get, for any $c > 0$ and all $\timePoint < \firstHittingTime$, that $\Pr{\randomProcess[\timePoint + 1] > c\randomProcess[\timePoint]}[\naturalFiltration{\timePoint}] \leq 1/c$, as $\E{\randomProcess[\timePoint + 1]}[\naturalFiltration{\timePoint}] \leq \randomProcess[\timePoint] - \delta < \randomProcess[\timePoint]$. Thus, inductively, we get $\Pr{\randomProcess[\timePoint + 1] > c^{\timePoint + 1}\randomProcess[0]}[\naturalFiltration{\timePoint}] \leq (\timePoint + 1)/c$ via a union bound by pessimistically assuming that $\{\randomProcess[\timePoint + 1] > c^{\timePoint + 1}\randomProcess[0]\}$ already holds if, for any $\specialTimePoint \leq \timePoint$, $\{\randomProcess[\specialTimePoint + 1] > c\randomProcess[\specialTimePoint]\}$. By defining $z = c^{\timePoint + 1}a$, we get that $\Pr{\randomProcess[\timePoint + 1] > z}[\naturalFiltration{\timePoint}] \leq (\timePoint + 1)/\sqrt[\timePoint + 1]{z/a}$.
    
    In order for $\overline{A_{z, \timePoint}}$ to occur, it is sufficient that there is a $\specialTimePoint \leq \timePoint$ such that the event $\{\randomProcess[\specialTimePoint] > z\}$ occurs. Hence, via another union bound over all of these possibilities, we get
    \[
    \Pr{\overline{A_{z, \timePoint}}} \leq \sum_{\specialTimePoint = 0}^{\timePoint} \frac{(\specialTimePoint + 1)\sqrt[\specialTimePoint + 1]{a}}{\sqrt[\specialTimePoint + 1]{z}} \leq \frac{(\timePoint + 1)^2 a}{\sqrt[\timePoint + 1]{z}}\ ,
    \]
    which goes to~$0$ as~$z$ approaches infinity, since~\timePoint is fixed.
    
    Overall, we get that, for all $k \in \Na$, $\lim_{z \to \infty} \Pr{\firstHittingTime[z] = k}[][\big] \geq \Pr{\firstHittingTime = k}$. By applying Lemma~\ref{lem:convergenceLemma}, we now see that $\E{T}[\randomProcess[0]] \leq \lim_{z \to \infty} \E{T[z]}[\randomProcess[0]][\big] \leq \randomProcess[0]/\delta$, where the last inequality follows from an application of Theorem~\ref{thm:addDriftBounded}.\qed
\end{proof}

\subsection{Proof of Theorem~\ref{thm:varDriftUnbounded}}

\begin{proof}[of Theorem~\ref{thm:varDriftUnbounded}]
    The proof follows the one given by Rowe and Sudholt~\cite{RoweS14} very closely. We define a function $g\colon D \cup [0, \xmin] \to \Re_{\geq 0}$ as follows:
    \[
    g(x) =
    \begin{cases}
    0 &\textrm{if $x < \xmin$,}\\
    \frac{\xmin}{h(\xmin)} + \int_{\xmin}^{x} \frac{1}{h(z)} \d z &\textrm{else.}
    \end{cases}
    \]
    Note that~$g$ is well-defined, since $1/h$ is monotonically decreasing and every monotone function is integrable over all compact intervals of its domain. Further, $g(\randomProcess[\timePoint]) = 0$ holds if and only if $\randomProcess[\timePoint] < \xmin$. Thus, both processes have the same first-hitting time.
    
    Assume that $x \geq y \geq \xmin$. We get
    \[
    g(x) - g(y) = \int_{y}^{x} \frac{1}{h(z)} \d z \geq \frac{x - y}{h(x)}\ ,
    \]
    since~$h$ is monotonically increasing. Assuming $y \geq x \geq \xmin$, we get, similar to before,
    \[
    g(x) - g(y) = -\int_{x}^{y} \frac{1}{h(z)} \d z \geq -\frac{y - x}{h(x)} = \frac{x - y}{h(x)}\ .
    \]
    Thus, we can write, for $x \geq \xmin$ and $y \geq \xmin$,
    \[
    g(x) - g(y) \geq \frac{x - y}{h(x)}\ .
    \]
    
    Further, for $x \geq \xmin > y \geq 0$, we get
    \begin{align*}
        g(x) - g(y) &= \frac{\xmin}{h(\xmin)} + \int_{\xmin}^{x} \frac{1}{h(z)} \d z \geq \frac{\xmin}{h(x)} + \frac{x - \xmin}{h(x)}\\
        &= \frac{x}{h(x)} \geq \frac{x - y}{h(x)}\ .
    \end{align*}
    
    Overall, for $x \geq \xmin$ (including $\randomProcess[0] \geq \xmin$) and $y \in \Re_{\geq 0}$, we can estimate
    \[
    g(x) - g(y) \geq \frac{x - y}{h(x)}\ .
    \]
    
    We use this to determine the drift of the process $g(\randomProcess[\timePoint])$ as follows:
    \begin{align*}
        g(\randomProcess[\timePoint]) - \E{g(\randomProcess[\timePoint + 1])}[\naturalFiltration{\timePoint}] &= \E{g(\randomProcess[\timePoint]) - g(\randomProcess[\timePoint + 1])}[\naturalFiltration{\timePoint}]\\
        &\geq \frac{\E{\randomProcess[\timePoint] - \randomProcess[\timePoint + 1]}[\naturalFiltration{\timePoint}]}{h(\randomProcess[\timePoint])}\\
        &\geq 1\ ,
    \end{align*}
    where we used the condition on the drift of $\randomProcess[\timePoint]$.
    
    An application of Theorem~\ref{thm:addDriftUnbounded} completes the proof.\qed
\end{proof}

\subsection{Proof of Theorem~\ref{thm:varDriftUnboundedNoGap}}

\begin{proof}[of Theorem~\ref{thm:varDriftUnboundedNoGap}]
    This proof is almost identical to the proof of Theorem~\ref{thm:varDriftUnbounded}. The difference is that we define our potential function $g\colon D \to \Re_{\geq 0}$ as follows:
    \[
    g(x) =
    \begin{cases}
    0 &\textrm{if $x \leq \xmin$,}\\
    \int_{\xmin}^{x} \frac{1}{h(z)} \d z &\textrm{else.}
    \end{cases}
    \]
    As for $g(x) - g(y)$, the case $x \geq \xmin > y$ does not exist anymore, since we cannot get below~\xmin. Thus, the potential difference is the same in all cases, and nothing changes in the rest of the proof.\qed
\end{proof}

\subsection{Proof of Corollaries~\ref{cor:MultiDriftUnbounded} and~\ref{cor:MultiDriftUnboundedNoGap}}

\begin{proof}[of Corollary~\ref{cor:MultiDriftUnbounded}]
    We define a function $h\colon [\xmin, \infty) \to \Re^+$ with $h(x) = \delta x$. Note that~$h$ is monotonically increasing and that, by construction, for all $\timePoint < \firstHittingTime$, $\randomProcess[\timePoint] - \E{\randomProcess[\timePoint + 1]}[\naturalFiltration{\timePoint}] \geq h(\randomProcess[\timePoint])$. Thus, by applying Theorem~\ref{thm:varDriftUnbounded}, we get
    \[
    \E{\firstHittingTime}[\randomProcess[0]] \leq \frac{\xmin}{h(\xmin)} + \int_{\xmin}^{\randomProcess[0]} \frac{1}{h(z)} \d z = \frac{\xmin}{\delta\xmin} + \frac{\ln\left(\frac{\randomProcess[0]}{\xmin}\right)}{\delta}\ ,
    \]
    which completes the proof.\qed
\end{proof}

\begin{proof}[of Corollary~\ref{cor:MultiDriftUnboundedNoGap}]
    We define the same potential as in the proof of Corollary~\ref{cor:MultiDriftUnbounded} but apply Theorem~\ref{thm:varDriftUnboundedNoGap} instead.\qed
\end{proof}

\end{document}